\newcommand{\ov}{\overline}
\newtheorem{thm}{Theorem}
\newtheorem*{thm*}{Theorem}
\newtheorem{lem}{Lemma}
\newtheorem{fact}{Fact}
\newtheorem{conj}{Conjecture}
\newtheorem{defi}{Definition}
\title{Improved lower bound towards Chen-Chv\'atal conjecture}
\author[1]{Congkai Huang}
\date{August 16, 2023}
\affil[1]{School of Mathematical Sciences, Peking University, Beijing, China}
\begin{document}

\maketitle

\begin{abstract}
We prove that in every metric space where no line contains all the points, there are at least
$\Omega(n^{2/3})$ lines. This improves the previous $\Omega(\sqrt{n})$ lower bound on the 
number of lines in general metric space, and also improves the previous $\Omega(n^{4/7})$ lower bound on the number of lines
in metric spaces generated by connected graphs.
\end{abstract}

\section{Introduction}

A classic theorem in plane geometry states that every noncollinear set of $n$ points in the Euclidean space determine at least $n$ lines. This is a special case of a combinatorial theorem of De Bruijn and Erd\H{o}s~\cite{DBE} in 1948. In 2006, Chen and Chv\'{a}tal suggested that the theorem might be generalized to arbitrary metric spaces. In a metric space $(V, \rho)$,
for every pair of distinct points $a, b \in V$, the {\em line} $\ov{ab}$ is defined to be
\[
\begin{split}
\ov{ab} = \{x : & \rho(x, b) = \rho(x, a) + \rho(a, b) \text{ or } \\
& \rho(a, b) = \rho(a, x) + \rho(x, b) \text{ or } \rho(a, x) = \rho(a, b) + \rho(b, x) \}.
\end{split}
\]

If there is a line containing all the points, i.e. $\ov{ab} = V$, then $V$ is called a {\em universal line}. With this definition of the lines, Chen and Chv\'{a}tal conjectured (see \cite{CC})

\begin{conj}\label{conj.cc} In every finite metric space $(V, \rho)$, either there is a universal line, or else there are at least $|V|$ distinct lines.
\end{conj}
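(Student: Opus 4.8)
The plan is to attack Conjecture~\ref{conj.cc} by induction on $n = |V|$, using the classical De Bruijn--Erd\H{o}s double-counting argument as a template and isolating the metric-specific obstruction. Assume for contradiction that $(V,\rho)$ has no universal line yet carries $m$ distinct lines with $m < n$. Write $d_p$ for the number of lines through a point $p$ and $|L|$ for the number of points on a line $L$, and count incident versus non-incident point--line pairs. In the combinatorial (linear-space) setting the proof rests on a single fact: if $p \notin L$ then the lines $\overline{pq}$, for $q$ ranging over $L$, are pairwise distinct, whence $d_p \ge |L|$; feeding this into the identity $\sum_p (m - d_p) = \sum_L (n - |L|)$ together with a standard convexity step yields $m \ge n$. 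So the first step is to record this skeleton precisely and pin down exactly what metric input it requires.

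The heart of the matter, and the step I expect to be the main obstacle, is that in a general metric space two distinct points need not determine a unique line: the lines $\overline{pq}$ for $q \in L$ may collapse onto one another, so $d_p \ge |L|$ can fail badly. I would quantify the collapse by the parameter $r = \max |L \cap M|$ taken over pairs of distinct lines, since two points of $L$ can only yield the same line through $p$ when $p$ lies together with them on a common line meeting $L$ in $\ge 2$ points. If $r$ is bounded, the classical count survives with only a constant loss and already gives $m = \Omega(n)$, so the genuine difficulty is confined to the regime of large pairwise intersections. The plan is therefore a dichotomy: either every pair of lines meets in few points, in which case the linear-space argument applies, or some two lines share many points.

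To close the large-overlap regime I would exploit betweenness directly. If lines $L$ and $M$ share many points, those shared points are forced into long chains of relations of the form $\rho(a,b) = \rho(a,x) + \rho(x,b)$, i.e.\ into metric intervals, and I would try to show that such a chain either extends to cover all of $V$ --- producing the universal line we assumed absent, a contradiction --- or can be collapsed to a smaller metric space to which the inductive hypothesis applies, with the collapse destroying only the lines it ought to. Arranging the identification so that the quotient remains a genuine metric and so that lines lift back faithfully is the delicate part.

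Assembling the pieces, the base cases are immediate for small $n$, the small-overlap branch is settled by the De Bruijn--Erd\H{o}s count, and the large-overlap branch is settled by the betweenness/contraction argument feeding the induction. I want to be candid that it is precisely this large-overlap branch --- controlling how far two metric lines may coincide without forcing a universal line --- where every known approach stalls, and this is why the unconditional bound obtainable by these incidence methods is only $\Omega(n^{2/3})$ rather than the full $n$; upgrading the dichotomy into a genuine proof of equality is the crux on which the conjecture still rests.
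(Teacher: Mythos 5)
There is a genuine gap, and it is unavoidable: the statement you are proving is Conjecture~\ref{conj.cc}, which is \emph{open}. The paper does not prove it and does not claim to --- it proves only the weaker Theorem~\ref{thm.main}, an $\Omega(|V|^{2/3})$ lower bound --- so there is no proof in the paper against which your argument could match, and any complete argument you produced would have to be wrong somewhere. To your credit, your write-up concedes this in its last sentence; but that concession means what you have is a proof strategy with an admitted hole, not a proof. The hole is exactly where you place it: the large-overlap branch. Your dichotomy only confines the difficulty if the small-overlap branch tolerates $r=\max|L\cap M|$ up to a substantial threshold, but the De Bruijn--Erd\H{o}s count degrades multiplicatively (one gets only $d_p\ge |L|/r$, since each line through $p\notin L$ can absorb up to $r$ points of $L$), so the classical argument survives only for $r=O(1)$; consequently essentially every hard instance lands in the second branch. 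And in that branch the two specific mechanisms you propose both fail concretely: (i) many shared points of two lines do \emph{not} organize into a single long metric interval --- points of $\overline{ab}$ may lie between $a$ and $b$ or outside on either side, and the red/purple configurations of Fact~\ref{fact.relations} show shared structure that generates no long chain at all; (ii) there is no known quotient operation on a metric space that identifies points of an interval, keeps $\rho$ a metric, makes lines lift faithfully, and preserves the no-universal-line hypothesis --- betweenness is a global ternary relation, and collapsing one interval creates and destroys collinear triples far away from it. ``Arranging the identification'' is not a delicate step; it is the entire unsolved problem.

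It is also worth noting that the paper's actual route to $\Omega(n^{2/3})$ is structurally different from your incidence-counting template, which is useful to internalize. Rather than counting point--line incidences globally, the paper partitions $\binom{V}{2}$ into the classes $K(L)$ of generating pairs: if no class is large, the number of lines is already $\Omega(n^{2/3})$; otherwise one line $L$ has many generators, and the paper classifies the pairwise relations of generators (Fact~\ref{fact.relations}), stratifies $K(L)$ by the poset $\mathcal{P}_L$ into antichain levels, and then extracts distinct lines either from a large antichain via Fact~\ref{fact.star}, from a long chain via Lemma~\ref{lem.long_path}, or --- the new idea --- from large green components via the collinear orderings of Section~\ref{sect.green_cpt} and Lemmas~\ref{lem.distinct_lines_samecpt}--\ref{lem.distinct_lines}. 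In other words, the state of the art does not run your dichotomy and patch the large-overlap case; it studies the internal structure of a single heavily-generated line. If you want a tractable project, proving the theorem the paper actually proves along your lines (or improving the exponent) is the realistic target; a proof of Conjecture~\ref{conj.cc} itself is not obtainable by the steps you have written down.
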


It was proved in \cite{ACHKS} that every finite metric space without a universal line
contains $\Omega(|V|^{1/2})$ lines. In this article we improve the lower bound to $\Omega(|V|^{2/3})$:

\begin{thm}\label{thm.main}
In every finite metric space $(V, \rho)$ without a universal line, there are at least $\Omega(|V|^{2/3})$ lines.
\end{thm}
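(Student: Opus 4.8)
The plan is to argue by contradiction: assume $(V,\rho)$ has no universal line yet has fewer than $c\,n^{2/3}$ lines (with $n=|V|$ and $c$ a small constant to be fixed), and derive a contradiction. The cheapest reduction is to \emph{lines through a single point}. For a point $v$, each $w\neq v$ determines a line $\ov{vw}$ through $v$; if for some $v$ the map $w\mapsto\ov{vw}$ ever realizes at least $n^{2/3}$ distinct values, those are already $n^{2/3}$ distinct lines and we are done. So I may assume that for every $v$ this map takes fewer than $n^{2/3}$ values. Pigeonholing the $n-1$ points into these classes, every $v$ lies on a line $\ell$ carrying a set $W_v$ of more than $n^{1/3}$ points all satisfying $\ov{vw}=\ell$. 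Thus "few lines" forces the whole space to be covered by \emph{rich} lines (size $\gtrsim n^{1/3}$), and the problem becomes showing that such a covering cannot occur without producing many distinct lines.

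The delicate point is that the obvious continuation only recovers the known bound. If one takes a rich line $\ell$, picks a point $z\notin\ell$ (which exists because $\ell$ is not universal), looks at the lines $\ov{zw}$ for $w\in\ell$, and pigeonholes again, one extracts a sub-collection of $\ell$ lying on a common line through $z$; iterating this nested pigeonhole halts exactly at threshold $\sqrt n$, reproducing the $\Omega(\sqrt n)$ count. To beat $1/2$ I expect one must replace nested pigeonholing by a \emph{global} double count: track incidences $\sum_{\ell}|\ell|$ together with $L\ge\max_v(\text{lines through }v)$, and prove that the rich lines forced at different centres $v$ are, for the most part, \emph{distinct} lines rather than the same few lines reused. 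Equivalently, one bounds from above how many centres $v$ can share a single rich line in the role above, and feeds this into the count so that the sheer number of forced rich lines reaches $n^{2/3}$; the no-universal-line hypothesis enters by forbidding the degenerate configurations in which a handful of lines absorb all the incidences.

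The genuine obstacle is therefore a \emph{rigidity lemma} measuring how sparse collinearity is: for a fixed pair $\{w,w'\}$ I need that the centres $v$ with $\ov{vw}=\ov{vw'}$ (and, more generally, the configurations forcing $w,w'$ onto a common line from an external point) are severely constrained. This is subtle precisely because a metric line need not be closed — points of $\ov{cd}$ may fail to regenerate $\ov{cd}$, and the defining relation splits into three betweenness clauses — so the containments $w'\in\ov{vw}$ and $w\in\ov{vw'}$ must be combined with the triangle inequality at an extremal choice of configuration. Working along a diametral pair is a promising device, since there the two outer betweenness clauses collapse and $\ov{ab}$ reduces to the metric segment between $a$ and $b$, pinning down distances tightly enough to control collinearity. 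Making this rigidity uniform enough to survive the global count, and checking that the accounting compounds to exponent $2/3$ rather than stalling at $1/2$, is where essentially all the work lies; the surrounding reductions are pigeonhole and counting.
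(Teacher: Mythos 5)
Your proposal is a plan rather than a proof: the two ingredients you yourself identify as carrying ``essentially all the work'' --- the rigidity lemma bounding how many centres can share a rich line, and the global double count that compounds to exponent $2/3$ --- are never formulated precisely, let alone proved. The opening reduction (either some point sees $n^{2/3}$ distinct lines through it, or every point lies on a rich line with $\gtrsim n^{1/3}$ coherent points) is correct, but everything beyond it in your write-up is a description of what a proof would need to do, not an argument. Moreover, it is not clear that your proposed rigidity statement (bounding the centres $v$ with $\ov{vw}=\ov{vw'}$ for a fixed pair $\{w,w'\}$) is even the right quantity to control: in the paper's argument, a point $v$ incident to many pairs generating the same line is not an obstruction but a win --- if $vu_1,\dots,vu_d$ all generate $L$ and are pairwise incomparable, then $[u_ivu_j]$ for all $i<j$, and Fact \ref{fact.star} makes all $\binom{d}{2}$ lines $\ov{u_iu_j}$ distinct, which already yields $\Omega(n^{2/3})$ lines once $d > 2n^{1/3}$.

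What the paper does instead: it partitions $\binom{V}{2}$ by the line generated, so the global count is immediate --- if every class $K(L)$ has at most $4n^{4/3}$ pairs, there are $\Omega(n^{2/3})$ lines --- and all the work goes into the structure of a single rich class $K(L)$ with at least $4n^{4/3}$ generating pairs. This requires machinery entirely absent from your proposal: a classification (Fact \ref{fact.relations}) of how two pairs generating the same line can interact (ordered, blue, green, red, purple); a poset on $K(L)$ whose long chains give long collinear sequences and hence many lines (Fact \ref{fact.level_k_inner} and Lemma \ref{lem.long_path}), and whose large antichains either contain a high-degree point (the win above) or contain large components under the ``green'' interlocking relation; and then, for each green component, a collinear ordering of its endpoints (Lemmas \ref{lem.insertion_order}, \ref{lem.insertion}, \ref{lemma.P_line}) used to manufacture one new ``special'' line per consecutive pair of opening points, with Lemmas \ref{lem.distinct_lines_samecpt}, \ref{lem.distinct_lines_diffcpt} and \ref{lem.distinct_lines_exclusive} guaranteeing that these special lines are pairwise distinct within and across components. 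Your intuition that one must understand when many pairs generate the same line points in the right direction, but the resolution is this structural decomposition of $K(L)$, not a per-pair rigidity bound; without it, the proposal does not get past the $\sqrt{n}$ barrier it correctly diagnoses.
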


Every connected graph $G=(V, E)$ generates a metric space $(V, \rho)$ in the natural way ---
for each pair of vertices $u$ and $v$, $\rho(u, v)$ is defined to be the length of the shortest
path from $u$ to $v$, i.e., the minimum number of edges one needs to travel from $u$ to $v$.
In \cite{ACHKS} it was also proved that every finite metric space generated by 
a connected graph either contains a universal line, or else has $\Omega(|V|^{4/7})$
lines. Our work also improves the bound in this special case to $\Omega(|V|^{2/3})$.

There are special cases of Conjecture \ref{conj.cc} where progresses are made in the
past years. Kantor \cite{K} proved that in the plane with $L_1$ metric
a non-collinear set of $n$ points induces at least $\lceil n/2 \rceil$ lines,
improving an earlier lower bound of $n/37$ by Kantor and Patk\'os \cite{KP}.
For metric spaces with a constant number of distinct distances,
Aboulker, Chen, Huzhang, Kapadia, and Supko in \cite{ACHKS} gave a $\Omega(n)$ lower bound, they also proved that every metric space on $n$ points with distances in 
$\{0, 1, 2, 3\}$ has $\Omega(n^{4/3})$ distinct lines.
Many other interesting results and stories related to the conjecture can be found
in Chv\'atal's survey \cite{C}.

In Section \ref{sect.prelim} we give some notations used throughout the paper,
and give a characterization of pairs generating the same line.
In Section \ref{sect.generators} we study the structure of the relations
for the pairs generating the same line.
When the number of lines is small, there must be a line with many different generating pairs.
The key idea allows us to improve the lower bound is a careful study
of the ``interlocked'' (we defined them as a green relation) generating pairs.
In Section \ref{sect.green_cpt} we study the structure of each component
connected by the green relations. This study allow us to find many lines
if the green component is large.
Finally in Section \ref{sect.proof_main} we combine all the pieces together
and prove Theorem \ref{thm.main}.

\section{Notations and preliminaries}\label{sect.prelim}

For distinct points $a_0$, $a_1$, ..., $a_k$,
\[ [a_0 a_1 ... a_k] \;\;\mbox{means}\;\; \rho(a_0, a_k) = \sum_{i=0}^{k-1} \rho(a_i, a_{i+1}) \]
With this notation, for a metric space, we call three distinct points $a$, $b$, $c$ {\em collinear} if $[acb]$ or $[cab]$ or $[abc]$, and the line $\ov{ab}$ is the set of points consisting of $a$, $b$, and any $c$
that is collinear with $a$ and $b$.

\begin{defi}\label{defi.collinear_set_triple}
When $k \ge 1$ and $[a_0 a_1 ... a_k]$ happens, we call $(a_0, a_1, \dots, a_k)$
a {\em collinear sequence}, and call the set $\{a_0, a_1, \dots, a_k \}$ a collinear set.

For a collinear triple $\{a, b, c\}$, when $[acb]$ we say $c$ is {\em between} $a$ and $b$,
when $[cab]$ we say $c$ is on the {\em $a$-side} of $\{a, b\}$,
and when $[abc]$ we say $c$ is on the {\em $b$-side} of $\{a, b\}$.
In the latter two cases we say $c$ is {\em outside} $\{a, b\}$.
\end{defi}

We also use the standard notations about sequences.

\begin{defi}
For a sequence $\pi = (x_1, x_2, \dots, x_k)$,
its {\em reverse} is $\pi^R = (x_k, x_{k-1}, \dots, x_1)$.

For sequences $\pi = (x_1, x_2, \dots, x_k)$ and $\sigma = (y_1, y_2, \dots, y_s)$,
their {\em concatenation} is $\pi \circ \sigma = (x_1, x_2, \dots, x_k, y_1, y_2, \dots, y_s)$.
\end{defi}

The following facts are obvious. We list them and will use them frequently,
sometimes implicitly.

\begin{fact}\label{fact.collinear}
 If $(V,\rho)$ is a metric space and $a,b,c,d,a_i$ for $i=0,...,k$ and $b_j$ for $j=1,...,s$ are distinct points of $V$, then

(a) $[abc] \Leftrightarrow [cba]$;

(b) $[abc]$ and $[acb]$ cannot both hold;

(c) $[abc]$ and $[acd]$ implies $[abcd]$;

(d) more generally, $[a_0\dots a_k]$ and $[a_i b_1 \dots b_s a_{i+1}]$
imply $[a_0 \dots a_i b_1 \dots b_s a_{i+1} \dots a_k]$;

(d) $[a_0a_1...a_k]$ implies $\rho(a_s, a_t) = \sum_{i=s}^{t-1} \rho(a_i, a_{i+1})$ 
for every pair $s$ and $t$ such that $s < t$;

(e) $[a_0a_1...a_k]$ implies $[a_i a_j a_s]$ for every pair $i$ and $j$ such that $0 \leq i < j < s \leq k$.
\end{fact}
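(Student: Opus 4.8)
The plan is to derive all six items directly from the three defining properties of a metric---symmetry, positive-definiteness (so that $\rho(x,y)>0$ whenever $x \neq y$), and the triangle inequality---together with the unwinding of the notation $[a_0 \dots a_k] \Leftrightarrow \rho(a_0,a_k) = \sum_{i=0}^{k-1}\rho(a_i,a_{i+1})$. Items (a) and (b) serve as warm-ups. For (a), $[abc]$ unwinds to $\rho(a,c)=\rho(a,b)+\rho(b,c)$ and $[cba]$ to $\rho(c,a)=\rho(c,b)+\rho(b,a)$; these are literally the same equation once symmetry of $\rho$ is applied, so the equivalence is immediate. For (b), I would add the two hypothesized equations $\rho(a,c)=\rho(a,b)+\rho(b,c)$ and $\rho(a,b)=\rho(a,c)+\rho(c,b)$; cancelling $\rho(a,b)+\rho(a,c)$ from both sides leaves $2\rho(b,c)=0$, which forces $b=c$ and contradicts the standing assumption that the points are distinct.

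Items (c) and the first statement labeled (d) are pure algebraic substitution. For (c), I would take $\rho(a,d)=\rho(a,c)+\rho(c,d)$ from $[acd]$ and replace $\rho(a,c)$ using $[abc]$, obtaining $\rho(a,d)=\rho(a,b)+\rho(b,c)+\rho(c,d)$, which is exactly $[abcd]$. The general splicing statement (the first (d)) is the same move performed once inside a longer sum: expand the single term $\rho(a_i,a_{i+1})$ in the defining sum for $[a_0\dots a_k]$ using the hypothesis $[a_i b_1 \dots b_s a_{i+1}]$; since the endpoints $a_0,a_k$ are unchanged, the result is precisely the defining sum for the spliced sequence.

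The one step carrying any real content is the second statement labeled (d), that $[a_0\dots a_k]$ forces the equality $\rho(a_s,a_t)=\sum_{i=s}^{t-1}\rho(a_i,a_{i+1})$ on every sub-interval $s<t$. Here I would run a squeeze argument: by the triangle inequality,
\[
\rho(a_0,a_k) \le \rho(a_0,a_s)+\rho(a_s,a_t)+\rho(a_t,a_k) \le \sum_{i=0}^{k-1}\rho(a_i,a_{i+1}),
\]
where the second inequality applies the triangle inequality separately within each of the three blocks $a_0\dots a_s$, $a_s\dots a_t$, $a_t\dots a_k$. Because the two ends of the chain are equal by the hypothesis $[a_0\dots a_k]$, every inequality must in fact be an equality, and reading off the middle block yields the claim. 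Part (e) is then a one-line corollary: applying the second (d) to the sub-intervals $a_i\dots a_s$, $a_i\dots a_j$, and $a_j\dots a_s$ and using $\sum_{l=i}^{s-1} = \sum_{l=i}^{j-1} + \sum_{l=j}^{s-1}$ gives $\rho(a_i,a_s)=\rho(a_i,a_j)+\rho(a_j,a_s)$, i.e.\ $[a_i a_j a_s]$. I expect no genuine obstacle; the only point demanding care is the squeeze, where one must invoke the triangle inequality blockwise so that the two ends of the chain coincide with the single metric axiom value $\rho(a_0,a_k)$.
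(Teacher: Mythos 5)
Your proof is correct; the paper itself gives no proof of Fact~\ref{fact.collinear} (it declares these facts ``obvious'' and uses them freely), and your derivation---unwinding the definition of $[a_0\dots a_k]$, plus the blockwise triangle-inequality squeeze for the sub-interval equality and its one-line corollary (e)---is exactly the standard argument the authors evidently had in mind. No gaps.
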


The following is more general than Fact \ref{fact.collinear} (a) and (b).

\begin{fact}\label{fact.two_ordering}
The elements of every collinear set can form exactly two collinear sequences
and they reverse each other.
\end{fact}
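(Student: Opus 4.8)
The plan is to prove both halves of the statement --- that a collinear set admits at least two orderings into collinear sequences and at most two --- using the strict monotonicity of distances along a collinear sequence. First, for existence, fix a collinear sequence $(a_0,\dots,a_k)$ witnessing that $S=\{a_0,\dots,a_k\}$ is collinear. Its reverse $(a_k,\dots,a_0)$ is again a collinear sequence, since $\rho(a_k,a_0)=\rho(a_0,a_k)=\sum_{i=0}^{k-1}\rho(a_i,a_{i+1})$ and the summed consecutive distances are unchanged by reversal. Because $k\ge 1$ and the points are distinct, $a_0\ne a_k$, so these two orderings genuinely differ, and they reverse each other by construction.

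The heart of the argument is uniqueness, and I would first record the key monotonicity fact. Along any collinear sequence $(q_0,\dots,q_k)$ of $S$, the summation property in Fact~\ref{fact.collinear} gives $\rho(q_0,q_j)=\sum_{i=0}^{j-1}\rho(q_i,q_{i+1})$, which is \emph{strictly} increasing in $j$ because every consecutive distance $\rho(q_i,q_{i+1})$ is positive. Two consequences follow. First, for $i<j$ we have $\rho(q_i,q_j)=\sum_{t=i}^{j-1}\rho(q_t,q_{t+1})\le L$, where $L=\rho(q_0,q_k)$ is the total length, with equality forcing $i=0$ and $j=k$ (all omitted terms are positive). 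Hence the maximum pairwise distance in $S$ equals $L$ and is attained \emph{only} by the pair $\{q_0,q_k\}$. Since the diametral pair is intrinsic to the metric restricted to $S$ and independent of the chosen ordering, the unordered pair of endpoints of any collinear ordering of $S$ is forced.

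It then remains to see that fixing the starting endpoint pins down the entire sequence. Let $(p_0,\dots,p_k)$ be the given collinear sequence and $(q_0,\dots,q_k)$ any collinear ordering of $S$. By the previous step $\{q_0,q_k\}=\{p_0,p_k\}$. If $q_0=p_0$, then both sequences list the elements of $S$ in strictly increasing order of distance from $p_0$; since strict monotonicity makes these distances pairwise distinct, the sorted order is unique, and therefore $q_j=p_j$ for all $j$. If instead $q_0=p_k$, the same sorting argument with basepoint $p_k$ shows that $(q_0,\dots,q_k)$ is the reverse $(p_k,\dots,p_0)$. Thus every collinear ordering coincides with the given one or with its reverse, so there are exactly two.

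The only real obstacle is the rigidity of the ordering --- ruling out any nontrivial permutation of the points. This is precisely what the strict positivity of consecutive distances delivers: it makes the distances from either endpoint strictly increasing and hence distinct, so the order is uniquely recovered by sorting. Everything else is bookkeeping with Fact~\ref{fact.collinear}.
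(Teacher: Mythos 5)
Your proof is correct and follows essentially the same route as the paper's: identify the diametral pair as the unique pair achieving the maximum distance (forcing the endpoints of any collinear ordering), then recover the rest of the ordering by sorting on the strictly increasing distances from a chosen endpoint. You spell out the strict-monotonicity bookkeeping and the existence of the reverse ordering more explicitly, but the underlying argument is identical.
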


\begin{proof}
Suppose the elements of $W$ are $\{a_0, \dots, a_k\}$
and $[a_0 a_1 ... a_k]$,
then the pair $\{a_0, a_k\}$ is the unique pair in $W$ that achieves the diameter of $W$.
So, every collinear ordering either starts with $a_0$ and ends with $a_k$,
or starts with $a_k$ and ends with $a_0$. In either case, 
the ordering of positions of all the other points are determined by their distance to $a_0$.
\end{proof}

\begin{fact}\label{fact.star}
If $u, v, w, s$ are four distinct points of $V$ satisfying
$[usv]$, $[vsw]$, and $[wsu]$, then $\{u, v, w\}$ is not collinear.
\end{fact}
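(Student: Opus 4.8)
The plan is to reduce everything to the three distances from $s$ and translate each betweenness hypothesis into a linear equation, then assume collinearity of $\{u,v,w\}$ and derive a contradiction. First I would set $a = \rho(s,u)$, $b = \rho(s,v)$, and $c = \rho(s,w)$. Since the four points are distinct, each of $a$, $b$, $c$ is strictly positive, and this strict positivity is exactly what will drive the contradiction. The three hypotheses then say precisely that
\[
\rho(u,v) = a+b, \qquad \rho(v,w) = b+c, \qquad \rho(w,u) = c+a,
\]
so every pairwise distance among $u$, $v$, $w$ is expressed as the sum of exactly two of the radii $a,b,c$.

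Next I would suppose, toward a contradiction, that $\{u,v,w\}$ is collinear. By the definition of collinearity (together with Fact \ref{fact.collinear}(a), which lets me collapse each betweenness relation and its reverse), one of the three points lies between the other two. The configuration is symmetric under cyclically relabeling $(u,v,w)$, so it suffices to treat a single case, say $[uvw]$, i.e.\ that $v$ is between $u$ and $w$. Then $\rho(u,w) = \rho(u,v) + \rho(v,w)$, and substituting the expressions above gives $c+a = (a+b)+(b+c)$, that is $2b = 0$. Hence $b = \rho(s,v) = 0$, contradicting $s \ne v$. The remaining two cases, $w$ between $u$ and $v$ and $u$ between $v$ and $w$, are identical after permuting the labels and force $c = 0$ or $a = 0$ respectively, each again contradicting distinctness from $s$.

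The computation is routine once the notation is fixed, so there is no serious obstacle here; the only thing worth flagging is the conceptual point that makes it work. Because the three betweenness conditions put $s$ strictly between each of the three pairs, they force each side length of the triangle $uvw$ to be a sum of two distinct radii. Any additional collinearity relation among $u$, $v$, $w$ would then equate one such side to the sum of the other two, thereby double-counting a single radius and setting it to zero. Thus the ``hard part'' is simply recognizing this double-counting; everything else follows from the positivity guaranteed by the four points being distinct.
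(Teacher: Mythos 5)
Your proposal is correct and follows essentially the same route as the paper: both express the three pairwise distances among $u,v,w$ as sums of the radii $\rho(s,u)$, $\rho(s,v)$, $\rho(s,w)$ and observe that positivity of these radii makes any betweenness relation among $u,v,w$ impossible. The paper states this as the strict triangle inequality (the sum of any two side lengths exceeds the third), while you phrase it as deriving $2b=0$ in the equality case; these are the same computation.
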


\begin{proof}
Let $x = \rho(u, s)$, $y = \rho(v, s)$, $z = \rho(w, s)$.
Then $\rho(u, v) = x+y$, $\rho(v, w) = y+z$, and $\rho(w, u) = z+x$.
The sum of any two of them is greater than the third. 
\end{proof}

\begin{defi}
Let $(V, \rho)$ be a metric space and let $L$ be a line of $(V, \rho)$,
define the set of its {\em generating pairs}
\begin{equation}\label{eq.def_generator_pairs}
K = K(L) := \left\{ \{a, b\} \in \binom{V}{2} : \ov{ab} = L \right\}.
\end{equation}
\end{defi}

For the rest of this work, we use $ab$ to denote the binary set $\{a, b\}$.

First we discuss the possible relations between two pairs generating the same line $L$.
\footnote{Some of the classifications here are similar to those in Section 6 of \cite{ACHKS}.}
In order to formally do this, we introduce a little notation.

\begin{defi}
for a pair of points $e = \{a, b\}$, we denote $\rho(e) = \rho(a, b)$.
\end{defi}

\begin{fact}\label{fact.relations}
For any $e_1, e_2 \in K(L)$ where $\rho(e_1) \ge \rho(e_2)$, exactly one of the following happens.

(o) {\em ordered relation}:

(o.1) $e_1 = e_2$;

(o.2) $|e_1 \cap e_2| = 1$, they can be written as $e_1 = \{a, b\}$, $e_2 = \{a, c\}$, and $[acb]$;

(o.3) $|e_1 \cap e_2| = 0$, they can be written as $e_1 = \{a, b\}$, $e_2 = \{c, d\}$,
and $[acdb]$.

(b) {\em blue relation}:

(b.1) $|e_1 \cap e_2| = 1$, they can be written as $e_1 = \{a, b\}$, $e_2 = \{a, c\}$,  and $[bac]$;

(b.2) $|e_1 \cap e_2| = 0$, they can be written as $e_1 = \{a, b\}$, $e_2 = \{c, d\}$,
and $[abcd]$.

(g) {\em green relation}: $|e_1 \cap e_2| = 0$, they can be written as $e_1 = \{a, b\}$, $e_2 = \{c, d\}$,
and $[acbd]$.

(r) {\em red relation}: 
$|e_1 \cap e_2| = 0$, they can be written as $e_1 = \{a, b\}$, $e_2 = \{c, d\}$, 
and there are positive reals $x$ and $y$
such that 
$\rho(a, b) = \rho(c, d)=x$,
$\rho(a, c) = \rho(b, d) = y$, and $\rho(a, d) = \rho(b, c) = x+y$.

(p) {\em purple relation}:
$|e_1 \cap e_2| = 0$, they can be written as $e_1 = \{a, b\}$, $e_2 = \{c, d\}$, 
and there are positive reals $x$ and $y$
such that 
$\rho(a, c) = \rho(b, d)=x$,
$\rho(a, d) = \rho(b, c)=y$, and $\rho(a, b) = \rho(c, d) = x+y$.
\end{fact}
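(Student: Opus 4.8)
The plan is to case-split on $|e_1 \cap e_2|$ and, in the disjoint case, on whether the four points are collinear. The single observation used throughout is that $e_1, e_2 \in K(L)$ forces all the involved points into $L$: since $\ov{ab} = \ov{cd} = L$, any point of one pair that is not shared lies on the line of the other pair and is therefore collinear with it. If $e_1 = e_2$ we are in case (o.1). If $|e_1 \cap e_2| = 1$, write $e_1 = \{a,b\}$, $e_2 = \{a,c\}$ with $b \ne c$; then $c \in \ov{ab}$, so $\{a,b,c\}$ is collinear and exactly one of $[abc]$, $[acb]$, $[cab]$ holds. The option $[abc]$ gives $\rho(a,c) = \rho(a,b) + \rho(b,c) > \rho(e_1)$, contradicting $\rho(e_1) \ge \rho(e_2)$, so it is excluded; $[acb]$ is exactly case (o.2), and $[cab]$ (equivalently $[bac]$ by Fact \ref{fact.collinear}(a)) is exactly case (b.1).

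Next suppose $|e_1 \cap e_2| = 0$, so $a,b,c,d$ are distinct. Since $c,d \in \ov{ab}$ and $a,b \in \ov{cd}$, all four triples $\{a,b,c\}$, $\{a,b,d\}$, $\{a,c,d\}$, $\{b,c,d\}$ are collinear. First assume the whole set $\{a,b,c,d\}$ is collinear, with collinear order $[p_1 p_2 p_3 p_4]$ by Fact \ref{fact.two_ordering}. The two disjoint pairs $e_1, e_2$ form one of the three perfect matchings of $\{p_1,p_2,p_3,p_4\}$: the nested one $\{p_1 p_4, p_2 p_3\}$, the separated one $\{p_1 p_2, p_3 p_4\}$, or the interlocked one $\{p_1 p_3, p_2 p_4\}$. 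After relabeling endpoints within each pair and reversing the order if necessary so that $e_1$ appears first, these three matchings give precisely $[acdb]$ (o.3), $[abcd]$ (b.2), and $[acbd]$ (g); the constraint $\rho(e_1)\ge\rho(e_2)$ is consistent with each (in the nested case it forces $e_1$ to be the outer pair).

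The main case is when the four triples are collinear but $\{a,b,c,d\}$ is not. Here I would start from a diameter pair: let $\{x,y\}$ achieve the maximum distance $D$ and let $z,w$ be the other two points. Collinearity of $\{x,y,z\}$ together with maximality of $D$ forces $[xzy]$ (the other two orderings would make a distance exceed $D$), and likewise $[xwy]$; so $z$ and $w$ both lie strictly between $x$ and $y$, with $\rho(x,z)+\rho(z,y) = \rho(x,w)+\rho(w,y) = D$. Writing $\delta = \rho(z,w)$ and using that $\{x,z,w\}$ and $\{y,z,w\}$ are collinear, $\delta$ equals the sum or the absolute difference of the two distances to $x$, and also the sum or the absolute difference of the two distances to $y$. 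Checking the four combinations: a "difference" option places one of $z,w$ between $x$ and the other, which with $[xwy]$ (resp. $[xzy]$) and Fact \ref{fact.collinear}(c) yields $[xzwy]$ or $[xwzy]$, making all four points collinear and so excluded; the two mixed options are impossible by strict positivity of distances; and the remaining "sum/sum" option forces $\rho(z,w) = D$ with $\rho(x,z) = \rho(y,w)$ and $\rho(x,w) = \rho(y,z)$. Thus the quadruple takes exactly three distance values $\alpha, \beta, \alpha+\beta$, one constant on each perfect matching, with the matching $\{xy, zw\}$ carrying the largest value $\alpha + \beta = D$.

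It then remains to identify which matching is the generating pair. Because opposite edges of every matching are equal, here $\rho(e_1) = \rho(e_2)$ automatically. If $\{e_1,e_2\}$ is the maximum matching $\{xy, zw\}$, matching the distances against the definitions yields the purple relation (p); if $\{e_1,e_2\}$ is either of the other two matchings, it yields the red relation (r). Finally, exclusivity holds: the intersection size separates (o.1) from the intersection‑one options and from the disjoint options; within intersection one, $[acb]$ and $[bac]$ cannot coexist by Fact \ref{fact.collinear}(b); within the disjoint options, the collinear configurations (o.3), (b.2), (g) correspond to three distinct matchings and are separated from the non‑collinear (r), (p) by collinearity of $\{a,b,c,d\}$, while red and purple differ by whether the generating matching carries the largest distance. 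The hardest step is the non‑collinear analysis of the previous paragraph, and the key that makes it short is choosing a diameter pair first, which pins the remaining two points between it and reduces everything to the small sum/difference computation.
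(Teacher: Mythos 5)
Your proposal is correct, but it takes a genuinely different route from the paper's. The paper splits the disjoint case according to which distance among the four points can be taken maximal --- a generating-pair distance $\rho(a,b)$ (its Case 1) or a cross distance $\rho(a,d)$ (its Case 2) --- and then runs through the possible orderings of the collinear triples, pinning down the red and purple relations via two displayed triangle-equality computations. You instead split on whether the quadruple $\{a,b,c,d\}$ is collinear as a set: when it is, Fact \ref{fact.two_ordering} gives a unique ordering up to reversal and the three perfect matchings immediately yield (o.3), (b.2), (g); when it is not, your diameter-pair argument forces the other two points strictly between the diameter endpoints, the sum/difference analysis of $\rho(z,w)$ eliminates everything but the ``sum/sum'' combination (any difference option recreates a collinear quadruple via Fact \ref{fact.collinear}(c)), and the resulting three-valued distance pattern gives (r) or (p) according to which matching the generating pairs form. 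Your decomposition buys a more transparent exclusivity argument: (o.3), (b.2), (g) hold exactly when the quadruple is collinear, while in (r) and (p) the maximum distance is attained by two distinct pairs, which is impossible among four collinear points, and (r) versus (p) is decided by whether the generating pairs carry that maximum; the paper simply attributes uniqueness to Fact \ref{fact.two_ordering}. In exchange, the paper's maximal-distance split needs no separate collinearity dichotomy and settles each case by a short direct computation. One small wrinkle in your write-up: the remark disposing of the ``mixed'' (one sum, one difference) combinations by positivity is redundant, since your collinearity argument already excludes every combination containing a difference; but that positivity argument is itself valid (it forces a zero distance between distinct points), so nothing is at stake.
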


These relations are depicted in Figure \ref{fig.relations}.
(It is helpful to see the corresponding cases in
Figure \ref{fig.relations} when reading the following proof;
it is also helpful to use the figure when reading the rest of the paper.)
 
\begin{figure}[h!]
\begin{center}
\includegraphics[width=5in]{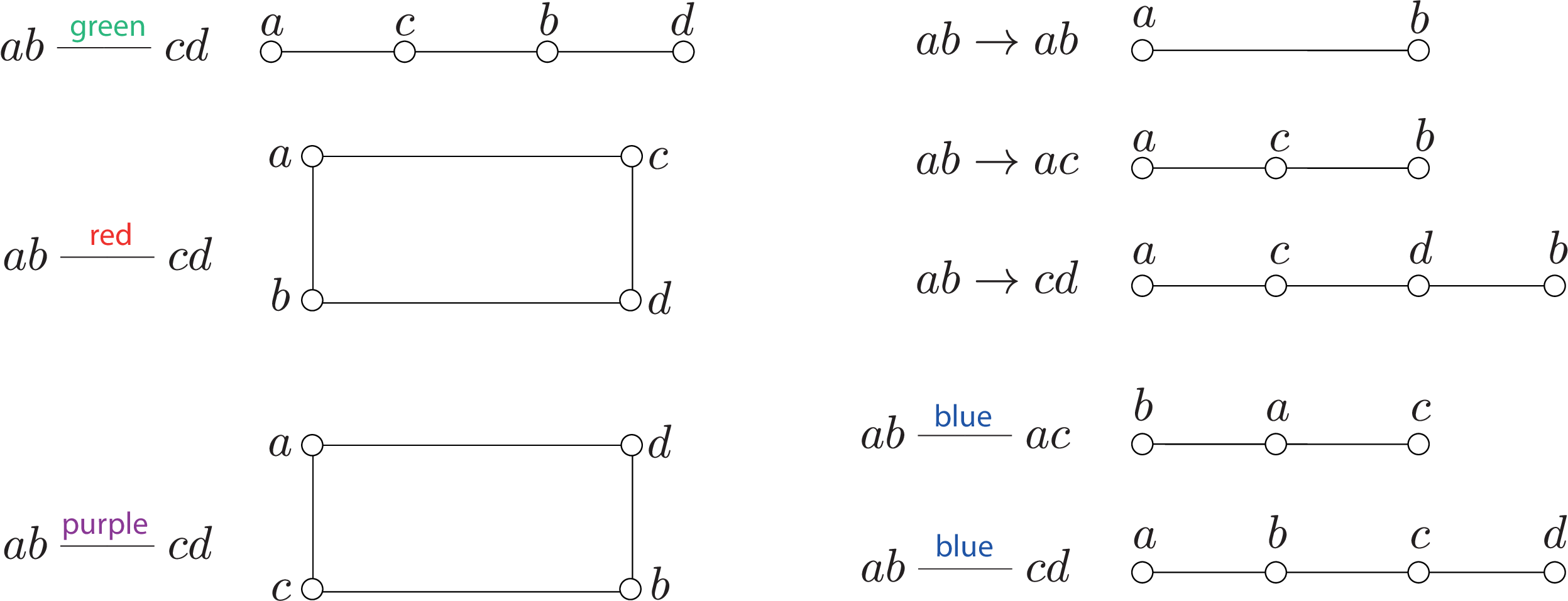}
\caption{relations for $e_1, e_2 \in K(L)$.}
\label{fig.relations}
\end{center}
\end{figure}

\begin{proof}
We only need to show $e_1$ and $e_2$ has one of the relations.
The uniqueness obviously follows Fact \ref{fact.two_ordering}.

When $|e_1\cap e_2| = 2$, obviously (o.1) happens.
When $|e_1 \cap e_2| = 1$,
write $e_1 = \{a, b\}$ and $e_2 = \{a, c\}$. 
Since $c \in \ov{ac} = \ov{ab} = L$,
the set $\{a, b, c\}$ is collinear, one of $[bac]$, $[acb]$, and $[abc]$ happens.
The first situation is (b.1), the second is (o.2), and the last does not happen since we 
assumed $\rho(e_1) \ge \rho(e_2)$. 

Finally, $|e_1\cap e_2| = 0$. Write $e_1 = \{a, b\}$, $e_2 = \{c, d\}$.
Since $\{c, d\} \subseteq \ov{cd} = \ov{ab} = L$,
both $\{a, b, c\}$ and $\{a, b, d\}$ are collinear. 
Similarly, $\{a, c, d\}$ and $\{b, c, d\}$ are collinear.
Since $\rho(a, b) \ge \rho(c, d)$, we may choose the letters so that
either $\rho(a, b)$ is one of the largest distances among the fours points,
or $\rho(a, d)$ is one of  the largest. We discuss two cases.

{\em Case 1.} $\rho(a, b)$ is one of the largest distances among $\{a, b, c, d\}$.
Since $\{a, b, c\}$ and $\{a, b, d\}$ are both collinear,
both $c$ and $d$ are between $a$ and $b$, i.e., $[acb]$ and $[adb]$.
We may choose the letters so that $\rho(a, c) \le \rho(a, d)$.
Let $\rho(a, c)=x$, $\rho(a, d)=y$, $0<x\le y$.
Because $a \in \ov{ab} = \ov{cd} = L$, we have $[adc]$, $[acd]$, or $[cad]$.
$[adc]$ contradicts the assumption $\rho(a, c) \le \rho(a, d)$.

{\em Case 1.1.} $[acd]$. Fact \ref{fact.collinear} (c) together with $[adb]$ imply $[acdb]$;
and this is (o.3).

{\em Case 1.2.} $[cad]$. We have $\rho(c, d) = x+y$.
Let $x' = \rho(b, d)$ and $y' = \rho(b, c)$.
By the assumption $[acb]$ and $[adb]$, we have
\begin{equation}\label{eq.generators_01}
x+y' = \rho(a, c)+\rho(c, b) = \rho(a, b) = \rho(a, d) + \rho(b, d) = y+x'.
\end{equation}
$x \le y$ implies $x' \le y'$. $\{b, c, d\}$ is collinear,
so $[dcb]$, $[cdb]$, or $[cbd]$. $[dcb]$ contradicts the fact $x' \le y'$.
$[cdb]$, together with $[acb]$ and Fact \ref{fact.collinear} (c) implies $[acdb]$;
this is (o.3). Finally, $[cbd]$ implies $x+y = \rho(c, d) = x'+y'$,
together with \eqref{eq.generators_01}, we have $x=x'$ and $y=y'$;
this is situation (p).

{\em Case 2.} $\rho(a, d)$ is one of the largest distances among $\{a, b, c, d\}$.
Since every triple in $\{a, b, c, d\}$ is a collinear set, we have $[abd]$ and $[acd]$.
Now, we discuss on the collinear set $\{a, b, c\}$.

{\em Case 2.1.} $[abc]$. Together with $[acd]$ and Fact \ref{fact.collinear} (c)
we have $[abcd]$; this is situation (b.2).

{\em Case 2.2.} $[acb]$. Together with $[abd]$ and Fact \ref{fact.collinear} (c)
we have $[acbd]$; this is situation (g).

{\em Case 2.3.} $[bac]$. Let $x = \rho(a,b)$ and $y = \rho(a, c)$, 
then $\rho(b, c) = x+y$. Let $x' = \rho(c, d)$ and $y' = \rho(b, d)$.
$[abd]$ and $[acd]$ imply
\begin{equation}\label{eq.generators_02}
x+y' = \rho(a, b) + \rho(b, d) = \rho(a, d) = \rho(a, c) + \rho(c, d) = x'+y.
\end{equation}
Note that $\{b, c, d\}$ is collinear. If $[bcd]$, together with $[bac]$
and Fact \ref{fact.collinear} (c) we have $[bacd]$ contradicts the
maximality assumption of Case 2. The same happens when $[cbd]$.
So we have $[cdb]$, so
\[
\rho(b, c) = x+y = x'+y'.
\]
Together with \eqref{eq.generators_02} imply
$x=x'$ and $y=y'$. It is easy to see this is situation (r).
\end{proof}

Next, we define a graph on the pairs of points.
Throughout the paper we use {\em points} for the elements
of the metric space, and {\em vertices} for the vertices of
the graph, so each vertex is a pair of points.

\begin{defi}
Define a relation $\mathcal{P}_L = (K(L), \preccurlyeq)$
as $e_2 \preccurlyeq e_1$ if and only if
$e_1$ and $e_2$ satisfy one of the ordered relations
(o) in Fact \ref{fact.relations}.

Define a coloured graph $\mathcal{G}_L$ on $K(L)$,
$e_1$ and $e_2$ has an edge with colour
blue (respectively, green, red, purple)
whenever they have the blue (respectively, green, red, purple)
relation as in Fact \ref{fact.relations},
and we denote this by $e_1 \sim_b e_2$
(respectively, $e_1 \sim_g e_2$, $e_1 \sim_r e_2$, $e_1 \sim_p e_2$).
\end{defi}

By Fact \ref{fact.collinear}, it is easy to check that
$\mathcal{P}_L$ is a partially ordered set (poset).
Now we give a partition of $K(L)$.

\begin{defi}
For $e \in K(L)$, define $\ell(e)$
be the length of the longest chain in the poset $\mathcal{P}_L$
with $e$ as its maximum element.

For a positive integer $k$, define 
\[
\mathcal{P}_L^{(k)} = \ell^{-1}(k) = \{e \in K(L): \ell(e) = k\}.
\]
and $\mathcal{G}_L^{(k)}$ be the (coloured) induced subgraph
of $\mathcal{G}_L$ on $\mathcal{P}_L^{(k)}$.

Let $h(L)$ be the largest integer $k$
for which $\mathcal{P}_L^{(k)} \neq \emptyset$.
This is the {\em height} of $\mathcal{P}_L$.
\end{defi}

It is a well known fact in partially ordered sets that

\begin{fact}\label{fact.antichain}
For each positive integer $k$, $\mathcal{P}_L^{(k)}$ is an antichain.
\end{fact}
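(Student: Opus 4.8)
The plan is to show that the height function $\ell$ is strictly monotone with respect to the strict order $\prec$ (where I write $e_2 \prec e_1$ to mean $e_2 \preccurlyeq e_1$ and $e_2 \neq e_1$); the antichain property then drops out by contraposition. The single claim I would establish is: if $e_2 \prec e_1$, then $\ell(e_2) < \ell(e_1)$.

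To prove this claim, I would begin with a longest chain witnessing $\ell(e_2)$, say $c_1 \prec c_2 \prec \dots \prec c_m = e_2$ having $e_2$ as its maximum element and length $m = \ell(e_2)$. Since $e_2 \prec e_1$, I append $e_1$ at the top to form $c_1 \prec c_2 \prec \dots \prec c_m \prec e_1$. Because $\mathcal{P}_L$ is a poset (in particular transitive, as already noted to follow from Fact \ref{fact.collinear}), this appended sequence is a genuine chain; it has $e_1$ as its maximum element and length $m+1$. Since $\ell(e_1)$ is defined as the length of the \emph{longest} chain with maximum element $e_1$, this forces $\ell(e_1) \ge m+1 = \ell(e_2)+1 > \ell(e_2)$, as claimed.

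To finish, suppose $e_1, e_2 \in \mathcal{P}_L^{(k)}$ are comparable; without loss of generality $e_2 \preccurlyeq e_1$. If $e_1 \neq e_2$, then $e_2 \prec e_1$, and the monotonicity just established yields $\ell(e_2) < \ell(e_1)$, contradicting $\ell(e_1) = \ell(e_2) = k$. Hence $e_1 = e_2$, so $\mathcal{P}_L^{(k)}$ contains no two distinct comparable elements and is an antichain.

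I do not expect any genuine obstacle here: this is the standard fact that a height (rank-by-longest-chain) function stratifies a finite poset into antichains. The only point requiring even minor care is verifying that the appended sequence is actually a chain, and this is precisely where transitivity of $\preccurlyeq$ enters — already guaranteed since $\mathcal{P}_L$ has been checked to be a poset.
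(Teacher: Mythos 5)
Your proof is correct and is essentially the paper's own argument: the paper likewise takes a longest chain with the smaller element as maximum, adjoins the larger element on top (using transitivity of $\preccurlyeq$), and derives a contradiction with $\ell(e_1)=\ell(e_2)=k$. Your reformulation via strict monotonicity of $\ell$ is just a repackaging of that same chain-extension step.
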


\begin{proof}
Suppose $e_1 \neq e_2 \in \mathcal{P}_L^{(k)}$
and $e_1 \preccurlyeq e_2$.
By definition $\ell(e_1) = \ell(e_2) = k$.
There is a chain $C$ with maximum element $e_1$
and $|C|=k$.
But then $C \cup \{e_2\}$
is a chain with maximum element $e_2$
and size $k+1$, contradicts the fact $\ell(e_2) = k$.
\end{proof}

Consequently, since Fact \ref{fact.relations} tells us
that any two elements in $K(L)$ either are in the ordered relation or they form one of the coloured edges, we have

\begin{fact}\label{fact.complete_on_level}
For every integer $k$ with $1 \le k \le h(L)$,
the graph $\mathcal{G}_L^{(k)}$ is a complete graph (with coloured edges).
\end{fact}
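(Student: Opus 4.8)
The plan is to combine the trichotomy of Fact \ref{fact.relations} with the antichain property of Fact \ref{fact.antichain}. The essential point is that the comparabilities of $\mathcal{P}_L$ and the edge-colours of $\mathcal{G}_L$ together exhaust all pairs: Fact \ref{fact.relations} asserts that for any two $e_1, e_2 \in K(L)$ exactly one of the six listed situations occurs, and these split cleanly into the ordered relations (o), which define the poset, and the four coloured relations (b), (g), (r), (p), which define the edges. Completeness of $\mathcal{G}_L^{(k)}$ should therefore follow once we know that no two distinct vertices on a single level can be in an ordered relation.

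First I would fix $k$ with $1 \le k \le h(L)$ and take two distinct vertices $e_1, e_2 \in \mathcal{P}_L^{(k)}$; the goal is to produce a coloured edge between them. Since $e_1 \neq e_2$, situation (o.1) is impossible, so the unique relation guaranteed by Fact \ref{fact.relations} is one of (o.2), (o.3), (b), (g), (r), (p). It remains to eliminate the two remaining ordered relations.

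Next I would rule out (o.2) and (o.3) using the antichain hypothesis. In each of these cases the longer pair strictly dominates the shorter in $\rho$-length: for instance $[acb]$ forces $\rho(a,b) = \rho(a,c) + \rho(c,b) > \rho(a,c)$, and $[acdb]$ forces $\rho(a,b) > \rho(c,d)$, since all points involved are distinct and the metric is positive on distinct points. Hence either relation would make $e_1$ and $e_2$ comparable in $\mathcal{P}_L$, i.e. $e_2 \preccurlyeq e_1$ or $e_1 \preccurlyeq e_2$ with the two sides distinct. But Fact \ref{fact.antichain} says $\mathcal{P}_L^{(k)}$ is an antichain, so its distinct elements are never comparable; this contradiction eliminates (o.2) and (o.3).

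Consequently the unique relation between $e_1$ and $e_2$ must be one of the coloured relations, which by definition of $\mathcal{G}_L$ means $e_1$ and $e_2$ are joined by a blue, green, red, or purple edge, hence also an edge of the induced subgraph $\mathcal{G}_L^{(k)}$. As $e_1, e_2$ was an arbitrary pair of distinct vertices, $\mathcal{G}_L^{(k)}$ is complete. I do not expect any genuine obstacle here: the statement is essentially a bookkeeping consequence of the exhaustive case analysis already performed for Fact \ref{fact.relations}, and the only point needing a moment's care is the verification that the ordered relations really do induce strict comparability in $\mathcal{P}_L$, so that the antichain hypothesis can be applied.
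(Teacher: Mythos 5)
Your proof is correct and follows the same route as the paper: the paper derives Fact \ref{fact.complete_on_level} as an immediate consequence of Fact \ref{fact.relations} (every pair is either ordered or coloured) combined with Fact \ref{fact.antichain} (no two distinct elements of $\mathcal{P}_L^{(k)}$ are comparable). Your extra verification that the ordered relations force strict domination in $\rho$-length is harmless but redundant, since the ordered relations yield comparability in $\mathcal{P}_L$ directly by the definition of $\preccurlyeq$, and you have already assumed $e_1 \neq e_2$.
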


We also note that it is clear from Fact \ref{fact.relations} and Fact \ref{fact.collinear} that

\begin{fact}\label{fact.level_k_inner}
For every $ab \in \mathcal{P}_L^{(k)}$, there are at least $k-1$ inner points collinear with $a$ and $b$,
i.e., there are distinct points $x_1, x_2, \dots, x_{k-1}$ in $V \setminus \{a, b\}$ such that
\[
[a x_1 x_2 \dots x_{k-1} b].
\]
\end{fact}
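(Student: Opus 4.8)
The plan is to induct on $k$, using a longest chain witnessing $\ell(ab)=k$ and peeling off its top element.

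First I would unwind the definition: $ab \in \mathcal{P}_L^{(k)}$ means $\ell(ab)=k$, so there is a chain $e_1 \preccurlyeq e_2 \preccurlyeq \dots \preccurlyeq e_k = ab$ of $k$ distinct elements of $\mathcal{P}_L$ with $ab$ as its maximum. The base case $k=1$ asks for $0$ inner points and is vacuous. For the inductive step I would examine the second-from-top element $e_{k-1}$. The subchain $e_1 \preccurlyeq \dots \preccurlyeq e_{k-1}$ shows $\ell(e_{k-1}) \ge k-1$; and $\ell(e_{k-1}) \le k-1$, for otherwise a chain of length $\ge k$ with maximum $e_{k-1}$ could be extended by $ab$ into a chain of length $\ge k+1$ with maximum $ab$, contradicting $\ell(ab)=k$. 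Hence $e_{k-1} \in \mathcal{P}_L^{(k-1)}$, and the induction hypothesis yields $k-2$ distinct inner points strictly between the two points of $e_{k-1}$.

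Next I would exploit the single relation $e_{k-1} \preccurlyeq ab$. Since $e_{k-1}\neq ab$, this is one of the strict ordered relations (o.2) or (o.3) of Fact~\ref{fact.relations}; writing $e_{k-1}=cd$, we obtain a collinear sequence of the form $[a\cdots c\cdots d\cdots b]$ in which $c$ and $d$ lie in the segment $[a,b]$, and at least one of $c,d$ is \emph{strictly} between $a$ and $b$ (both of them in case (o.3), the non-shared endpoint in case (o.2)). I would then splice the inner sequence of $e_{k-1}$ coming from the induction hypothesis into $[\cdots c\cdots d\cdots]$ by the insertion rule of Fact~\ref{fact.collinear}, producing one collinear sequence running from $a$ to $b$ that contains the $k-2$ inherited inner points together with the strictly interior endpoint(s) of $e_{k-1}$. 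This gives at least $(k-2)+1 = k-1$ inner points of $[ab]$.

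The only point to check carefully — and the main, though mild, obstacle — is distinctness of the newly added inner point(s) from the $k-2$ inherited ones: the inherited points lie strictly between $c$ and $d$, whereas the added point is $c$ or $d$ itself, so they cannot coincide. I would also record the easy supporting observation that any element $\preccurlyeq ab$ has both of its points inside $[a,b]$, since the ordered relations (o.1)--(o.3) never place a point outside $\{a,b\}$ (unlike the blue relation); this is what guarantees that the spliced sequence genuinely runs from $a$ to $b$.
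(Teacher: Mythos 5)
Your proposal is correct and is essentially the paper's own (implicit) argument: the paper states Fact \ref{fact.level_k_inner} without proof, as something clear from Facts \ref{fact.collinear} and \ref{fact.relations}, and your induction along a longest chain --- showing the second-from-top element lies in $\mathcal{P}_L^{(k-1)}$ and then splicing its inner points into $[a\cdots b]$ through the ordered relation (o.2)/(o.3) via the insertion rule of Fact \ref{fact.collinear} --- is the natural formalization of exactly that. The one detail you leave implicit, that the inherited inner points are also distinct from $a$ and $b$, is automatic from the distance equations: a point strictly between the endpoints of $e_{k-1}$ is strictly closer to one of those endpoints than $a$ or $b$ is, by (o.2)/(o.3).
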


\section {The structure of $K(L)$}\label{sect.generators}

\begin{defi}
We call an element $e \in K(L)$ purple if 
$e \sim_p f$ for some $f \in K(L)$,
and denote $U(L)$ the set of all the purple elements;
call an element $e \in K(L)$ red if
$e \sim_r f$ for some $f \in K(L)$,
and denote $D(L)$ the set of all the red elements.
\end{defi}

\begin{fact}\label{fact.red_element}
For a red element $e = \{a, b\} \in D(L)$,
no point (other than $a$ and $b$) in $V$ is between $a$ and $b$;
furthermore, $e$ is a minimal element in the poset
$\mathcal{P}_L$.
\end{fact}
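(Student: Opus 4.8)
The plan is to reduce the whole statement to the single claim that \emph{no point lies strictly between $a$ and $b$}. This suffices because the only ordered relations that can place an element strictly below $e=\{a,b\}$ in the poset $\mathcal{P}_L$ are (o.2) and (o.3) of Fact~\ref{fact.relations}, and each of these forces a point of the smaller pair to lie strictly between $a$ and $b$; so once the claim holds, $e$ has no proper predecessor and is minimal. I would begin by unpacking the red relation: with $\rho(a,b)=\rho(c,d)=x$, $\rho(a,c)=\rho(b,d)=y$ and $\rho(a,d)=\rho(b,c)=x+y$, the defining equalities yield the betweenness data $[bac]$, $[abd]$, $[acd]$ and $[bdc]$. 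It is worth noting at the outset that $\{a,b,c,d\}$ is \emph{not} collinear, since $\{a,d\}$ and $\{b,c\}$ both realize the diameter, contradicting Fact~\ref{fact.two_ordering}; this is precisely why the argument cannot follow one global ordering and must proceed through individual distances.

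To prove the claim, suppose for contradiction that $p\neq a,b$ satisfies $[apb]$, so that $\rho(a,p)+\rho(p,b)=x$ with both terms positive. I would first verify $p\notin\{c,d\}$, for otherwise $[acb]$ or $[adb]$ would force $\rho(a,b)=x+2y$; hence $p\in L=\ov{cd}$ makes $\{c,d,p\}$ collinear. The key step is to determine the two distances from $p$ to $c$ and $d$. Combining $[cab]$ with $[apb]$ (via the concatenation rules of Fact~\ref{fact.collinear}, or equivalently by noting that the triangle bounds $\rho(c,a)+\rho(a,p)$ and $\rho(c,b)-\rho(p,b)$ coincide) gives $\rho(c,p)=y+\rho(a,p)$; symmetrically, $[apb]$ with $[abd]$ gives $\rho(d,p)=y+\rho(p,b)$.

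Finally I would play these two distances against the collinearity of $\{c,d,p\}$. Since $\rho(c,p)+\rho(d,p)=x+2y>x=\rho(c,d)$, I would rule out all three possible orderings of the triple: $[cpd]$ yields $x=x+2y$, while $[pcd]$ yields $\rho(p,b)=\rho(a,p)+x$ and $[cdp]$ yields $\rho(a,p)=\rho(p,b)+x$, each impossible since $\rho(a,p)$ and $\rho(p,b)$ are positive (and $y>0$). This contradiction establishes the claim, and minimality follows as explained. I expect the only real obstacle to be conceptual rather than computational: because the four red points are not jointly collinear, one must resist chasing a global order and instead isolate the distances from $p$ to $c$ and to $d$ and then invoke only the local collinearity of $\{c,d,p\}$; the remaining steps are routine triangle-inequality bookkeeping.
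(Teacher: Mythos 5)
Your proof is correct and takes essentially the same route as the paper's: both use the red partner $\{c,d\}$ to compute $\rho(p,c)=y+\rho(a,p)$ and $\rho(p,d)=y+\rho(p,b)$, conclude that $\{c,d,p\}$ cannot be collinear (your case analysis of the three orderings is just the paper's ``strict triangle inequality'' observation spelled out), contradict $p\in L=\ov{cd}$, and then deduce minimality from the fact that any proper predecessor in $\mathcal{P}_L$ would place a point between $a$ and $b$. Your explicit check that $p\notin\{c,d\}$ is a small detail the paper leaves implicit, but it does not change the argument.
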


\begin{proof}
Indeed, suppose $[avb]$.
Let $x_1 = \rho(a, v)$, $x_2 = \rho(v, b)$,
and $x=x_1+x_2 = \rho(a, b)$.
Pick an element $f \in K(L)$
such that $e \sim_r f$.
By Fact \ref{fact.relations}, $[cad]$ and $[acd]$,
together with $[avb]$ we have $[vac]$ and $[vbd]$.
Also by Fact \ref{fact.relations},
$\rho(a, c) = \rho(b, d) = y$, $\rho(c, d) = x$.
We have
\[
\begin{split}
[vac] \Rightarrow & \rho(v, c) = x_1 + y, \\
[vbd] \Rightarrow & \rho(v, b) = x_2 + y, \\
& \rho(c, d) = x.
\end{split}
\]
It is easy to see that the sum of any two of the
distances among $\{v, c, d\}$ is larger than the third.
So $v \not\in \ov{cd} = L$.
But $[avb]$ implies $v \in \ov{ab} = L$, a contradiction.

It immediately follows that $e$ is a minimal element
in $\mathcal{P}_L$ --- were there $f \in K(L)$, $f \neq e$ and $f \preccurlyeq e$,
by Fact \ref{fact.relations} (o) there would be another point in $L$ between $a$ and $b$.
\end{proof}

\begin{fact}\label{fact.purple_basic}
For a purple element $e = \{a, b\} \in U(L)$,
every point in $L$ (other than $a$ and $b$) is between $a$ and $b$,
i.e., for every $v \in L$, we have $[avb]$.
\end{fact}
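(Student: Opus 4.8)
The plan is to unpack the purple relation concretely and then show that any point of $L$ lying \emph{outside} $\{a,b\}$ would force a forbidden configuration on the partner pair. Fix $f = cd \in K(L)$ with $e \sim_p f$. By the purple case (p) of Fact~\ref{fact.relations} there are positive reals $x,y$ with
\[
\rho(a,c) = \rho(b,d) = x, \quad \rho(a,d) = \rho(b,c) = y, \quad \rho(a,b) = \rho(c,d) = x+y.
\]
From these equalities I would first record the four betweenness relations $[acb]$, $[adb]$ (so both $c$ and $d$ lie between $a$ and $b$) and $[cad]$, $[cbd]$ (so both $a$ and $b$ lie between $c$ and $d$); each is immediate, since in every case the largest of the three relevant pairwise distances equals the sum of the other two.

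Now take an arbitrary $v \in L$ with $v \notin \{a,b\}$. Since $v \in \ov{ab} = L$, the triple $\{a,b,v\}$ is collinear, so exactly one of $[avb]$, $[abv]$, $[vab]$ holds, and the goal is to exclude the two \emph{outside} possibilities. By the symmetry $(a,b,c,d) \mapsto (b,a,d,c)$, which preserves all the displayed distances and hence the entire purple configuration, it suffices to rule out $[abv]$. So suppose $[abv]$ and set $t = \rho(b,v) > 0$. Combining $[abv]$ with $[acb]$ and with $[adb]$ through Fact~\ref{fact.collinear}(c) yields $[acbv]$ and $[adbv]$, from which I can read off $\rho(c,v) = \rho(c,b) + \rho(b,v) = y + t$ and $\rho(d,v) = \rho(d,b) + \rho(b,v) = x + t$.

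The contradiction then comes from the partner pair. Since $v$ must also lie on $L = \ov{cd}$, the triple $\{c,d,v\}$ is collinear; yet with $\rho(c,d) = x+y$, $\rho(c,v) = y+t$, and $\rho(d,v) = x+t$ none of the three betweenness relations can hold, since each of them would force one of $t$, $x$, $y$ to vanish. Equivalently, from $[cbd]$, $[cbv]$, and $[dbv]$ the point $b$ sits ``opposite'' to every pair among $c,d,v$, so Fact~\ref{fact.star} directly gives that $\{c,d,v\}$ is not collinear, contradicting $v \in \ov{cd}$. This rules out $[abv]$, and by the symmetry above it rules out $[vab]$ as well, leaving $[avb]$ as claimed.

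I expect the only delicate points to be the bookkeeping in combining the betweenness relations and the verification that the chosen involution genuinely preserves the purple relation, so that treating a single outside case really does suffice; the underlying distance arithmetic, and the appeal to Fact~\ref{fact.star}, are routine.
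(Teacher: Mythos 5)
Your proof is correct and takes essentially the same route as the paper's: reduce by symmetry to the case $[abv]$, combine it with $[acb]$ and $[adb]$ (from the purple relation) to obtain $[cbv]$ and $[dbv]$, and then use $[cbd]$ together with Fact~\ref{fact.star} to contradict the collinearity of $\{c,d,v\}$ forced by $v \in \ov{cd} = L$. The only differences are cosmetic: the paper simply writes ``without loss of generality'' where you spell out the involution $(a,b,c,d)\mapsto(b,a,d,c)$, and your explicit distance computation with $t=\rho(b,v)$ is just an inlined version of the argument already packaged in Fact~\ref{fact.star}.
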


\begin{proof}
Since $e$ is purple, there is $f = \{c, d\} \in K(L)$
such that $e \sim_p f$.
Suppose $v \in L = \ov{ab}$ and $v$ is not between $a$ and $b$.
Without loss of generality, $[abv]$.
By Fact \ref{fact.relations} (p),
we have $[acb]$ and $[adb]$;
then by Fact \ref{fact.collinear} we have $[cbv]$ and $[dbv]$;
also Fact \ref{fact.relations} (p) tells us $[cbd]$,
so, by Fact \ref{fact.star}, $\{v, c, d\}$ is not collinear,
contradicts the fact $v \in L = \ov{cd}$.
\end{proof}

\begin{fact}\label{fact.purple_element}
When the set of purple elements $U(L) \neq \emptyset$, we have

(a) $U(L)$ is the set of all maximal elements in the poset $\mathcal{P}_L$;

(b) For any $e \neq f \in U(L)$, $e \sim_p f$;

(c) For any $e \in U(L)$ and $f \in K(L) \setminus U(L)$, $f \preccurlyeq e$.

(d) $U(L) = \mathcal{P}_L^{(h(L))}$, the last level of the poset $\mathcal{P}_L$.
\end{fact}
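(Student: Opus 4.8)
The plan is to establish the four parts in the order (a), then (b) and (c) together (they rest on a common sub-argument), and finally (d) as a consequence of the first three plus elementary poset reasoning. The engine throughout is Fact~\ref{fact.purple_basic}, which says that a purple $e=\{a,b\}$ has \emph{every} point of $L$ between $a$ and $b$; combined with the mutual exclusivity of the relations in Fact~\ref{fact.relations}, this tightly constrains how a purple pair can relate to any other generating pair.

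For (a) I would prove both inclusions. \emph{Purple implies maximal:} if $e=\{a,b\}\in U(L)$ and $e\preccurlyeq f$ with $f\ne e$, then $e$ and $f$ lie in an ordered relation with $f$ the larger pair, so in the witnessing collinear sequence the endpoints of $f$ are the extreme points while $a$ and $b$ are interior. But the endpoints of $f$ belong to $L$, so Fact~\ref{fact.purple_basic} requires each of them to lie between $a$ and $b$, which is impossible for an extreme point of that sequence; hence $e$ is maximal. \emph{Maximal implies purple:} using $U(L)\ne\emptyset$, fix a purple $e_0$. A maximal $g\ne e_0$ cannot be ordered with $e_0$, and a shared endpoint would itself produce the ordered relation (o.2); hence $g$ is disjoint from $e_0$ and joined to it by a coloured edge. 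Since $e_0$ is purple, both endpoints of $g$ lie between those of $e_0$, and inspecting the betweenness patterns shows the blue, green, and red relations each place an endpoint of $g$ outside $e_0$, leaving only purple; thus $g\sim_p e_0$ and $g$ is purple.

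Parts (b) and (c) reduce to the sub-claim just used, which I would isolate as a lemma: \emph{if $e=\{a,b\}$ is purple and $f=\{p,q\}$ is disjoint from $e$ with $p,q\in L$, then either $e$ and $f$ are ordered or $e\sim_p f$.} For (b), two distinct purple elements are maximal by (a); they cannot be ordered, and a shared endpoint would force (o.2), so they are disjoint and the lemma yields $e\sim_p f$. For (c), take purple $e$ and non-purple $f$: if they share an endpoint, Fact~\ref{fact.purple_basic} excludes (b.1) and leaves (o.2), so $f\preccurlyeq e$; if they are disjoint, the lemma permits only an ordered or a purple relation, and as $f$ is non-purple it is ordered, with both endpoints of $f$ between $a$ and $b$, giving the collinear sequence $[apqb]$ (up to swapping $p,q$) and hence $f\preccurlyeq e$.

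Finally (d). One inclusion is pure poset theory: if $\ell(e)=h(L)$ then $e$ is maximal, for otherwise a pair strictly above $e$ would extend a longest chain through $e$; thus $\mathcal P_L^{(h(L))}\subseteq U(L)$ by (a). For the reverse inclusion the decisive fact, supplied by (b) and (c), is that every purple $e$ has the \emph{same} set of strict predecessors, namely $K(L)\setminus U(L)$: by (c) each non-purple pair lies below $e$, while by (b) every other purple pair is joined to $e$ by a purple edge and is therefore incomparable to it. Hence $\ell(e)=1+(\text{length of the longest chain in }K(L)\setminus U(L))$ is constant on $U(L)$; as some purple element already attains level $h(L)$, this common value equals $h(L)$, whence $U(L)\subseteq\mathcal P_L^{(h(L))}$. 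I expect the main obstacle to be the geometric core of the lemma --- checking, case by case and independently of which pair is larger, that the blue, green, and red configurations cannot have both endpoints of one pair between the endpoints of the other; once that is in hand the down-set computation and the poset step are routine.
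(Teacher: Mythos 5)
Your proposal is correct and follows essentially the same route as the paper: your isolated lemma (a purple pair relates to any other generating pair either by an ordered relation or by a purple edge, since blue, green, and red configurations each force an endpoint outside the purple pair, contradicting Fact~\ref{fact.purple_basic}) is exactly the paper's key observation \eqref{eq.purple_max_01}, and parts (a)--(c) are derived from it in the same way. Your part (d) is phrased via constancy of the strict down-set $K(L)\setminus U(L)$ across purple elements rather than the paper's explicit longest non-purple chain of length $t$ with $\ell(e)=t+1$, but this is the same computation in different bookkeeping.
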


\begin{proof}
We first note that,
\begin{equation}\label{eq.purple_max_01}
\forall e \in U(L) \forall f \in K(L), f \preccurlyeq e \;\;\text{or}\;\; e \sim_p f.
\end{equation}
This can be seen by observing Fact \ref{fact.relations}.
The pair $\{e, f\}$ is in one of the relations, but anything other than
the two listed in \eqref{eq.purple_max_01}
leads to a contradiction with Fact \ref{fact.purple_basic}.

Denote $M$ the set of maximal elements in $\mathcal{P}_L$.

(a).  For every $e \in U(L)$, by \eqref{eq.purple_max_01},
there cannot exist $f \neq e$, $f \in K(L)$, such that
$e \preccurlyeq f$, so $e \in M$. Thus we have $U(L) \subseteq M$.
On the other hand, 
since $U(L) \neq \emptyset$, pick some $e^* \in U(L)$.
Suppose $f$ is any other maximal element so $f \preccurlyeq e$ does not hold.
By \eqref{eq.purple_max_01}, $e \sim_p f$, thus $f$ is purple by the definition.
Therefore, $M \subseteq U(L)$. 

(b). Now we know that both $e$ and $f$ are maximal elements, so they are not comparable,
and \eqref{eq.purple_max_01} implies $e \sim_p f$.

(c). Since $f$ is not purple, $e \not\sim_p f$, so \eqref{eq.purple_max_01} implies $f \preccurlyeq e$.

(d). If $U(L) = K(L)$, no two elements are comparable by (a), and we have $\ell(e) = 1$ for all $e \in K(L)$;
so $h(L) = 1$ and $\mathcal{P}_L^{(1)} = K(L) = U(L)$.
Otherwise, consider a longest chain without a purple element
\[
C = (e_1, e_2, \dots, e_t).
\]
Given any non-purple element $f \in K(L)$,
(c) implies that the longest chain with $f$ as its maximum element
does not contain purple elements, so we have
\begin{equation}\label{eq.purple_max_02}
\forall f \in K(L) \setminus U(L),\;\; \ell(f) \le t.
\end{equation}
On the other hand, for every $e \in U(L)$, 
(a) implies the longest chain with $f$ as its maximum element
does not contain any other purple elements,
so $\ell(e) \le t+1$; but $C \cup \{e\}$ is a chain, so
\begin{equation}\label{eq.purple_max_03}
\forall e \in K(L), \;\; \ell(e) = t+1.
\end{equation}
\eqref{eq.purple_max_02} and \eqref{eq.purple_max_03} imply
$h(L) = t+1$ and $U(L) = \mathcal{P}_L^{(h(L))}$.
\end{proof}

Now we turn to the study $\mathcal{P}_L^{(k)}$ for $2 \le k \le h(L)-1$.
By Facts \ref{fact.red_element} and \ref{fact.purple_element},
there are no red nor purple elements in such levels;
Fact \ref{fact.complete_on_level} implies that $\mathcal{G}_L^{(k)}$
is a complete graph with blue and green edges.

\begin{defi}
For a line $L$ and an index $k$ with $2 \le k \le h(L)-1$,
$\mathcal{R}_L^{(k)}$ is the green sub-graph of $\mathcal{G}_L^{(k)}$,
it has the vertex set $\mathcal{P}_L^{(k)}$ and has all the green edges of $\mathcal{G}_L^{(k)}$.
Denote $Q_L^{(k)}$ the set of isolated vertices in $\mathcal{R}_L^{(k)}$,
denote $c_L(k)$ the number of connected components of size at least 2 in $\mathcal{R}_L^{(k)}$,
we call them the {\em green components in level $k$},
denote $P_L^{(k, i)}$ ($i = 1, 2, \dots, c_L(k)$) the vertex set for each green component.

For every subset $U \subseteq \mathcal{P}_L^{(k)}$,
denote $V(U) \subseteq V$ the union of elements (each is a pair of points in $V$) of $U$,
i.e., all the endpoints of generating pairs of $L$ in $U$.
\end{defi}

\begin{fact}\label{fact.Q_size}
For every $2 \le  k \le h(L)-1$, $\left|Q_L^{(k)}\right| \le |V|/(k-1)$.
\end{fact}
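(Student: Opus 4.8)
The plan is to charge to each isolated vertex of the green graph its set of interior points and show these sets are pairwise disjoint. The first step is to record that the vertices of $Q_L^{(k)}$ are pairwise \emph{blue}-related. Since $2 \le k \le h(L)-1$, Facts~\ref{fact.red_element} and~\ref{fact.purple_element} guarantee that the level $\mathcal{P}_L^{(k)}$ has no red and no purple element, so by Fact~\ref{fact.complete_on_level} the graph $\mathcal{G}_L^{(k)}$ is complete with edges coloured only blue or green. Two distinct vertices $e, e' \in Q_L^{(k)}$ are isolated in the green subgraph $\mathcal{R}_L^{(k)}$, so the (necessarily present) edge $ee'$ cannot be green; hence $e \sim_b e'$.

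Next, for $e = ab \in Q_L^{(k)}$ I set $I(e) := \{ v \in V : [avb] \}$, the set of points strictly between $a$ and $b$. Fact~\ref{fact.level_k_inner} supplies points $x_1, \dots, x_{k-1}$ with $[a\,x_1 \cdots x_{k-1}\,b]$, each lying strictly between $a$ and $b$, so $|I(e)| \ge k-1$.

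The crux is the disjointness $I(e) \cap I(e') = \emptyset$ for distinct $e, e' \in Q_L^{(k)}$. By the first step $e \sim_b e'$, so they realise case (b.1) or (b.2) of Fact~\ref{fact.relations}, and in each case I turn a hypothetical common interior point $v$ into a contradictory ordering using the insertion/concatenation rules of Fact~\ref{fact.collinear} together with the uniqueness of a collinear ordering (Fact~\ref{fact.two_ordering}). In case (b.1), with $e=\{a,b\}$, $e'=\{a,c\}$ and $[bac]$, a common interior $v$ gives $[avb]$ and $[avc]$; inserting $v$ into $[bac]$ yields $[bvac]$, whose restriction to $\{v,a,c\}$ is $[vac]$, contradicting $[avc]$. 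In case (b.2), with $e=\{a,b\}$, $e'=\{c,d\}$ and $[abcd]$, a common interior $v$ gives $[avb]$ and $[cvd]$; inserting $v$ yields $[avbcd]$, whose restriction to $\{c,v,d\}$ is $[vcd]$, contradicting $[cvd]$. (In both cases one first notes, again from uniqueness of ordering, that $v$ differs from all the named points, so the longer sequences are genuine collinear sequences.)

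With disjointness in hand the count is immediate: the sets $I(e)$ for $e \in Q_L^{(k)}$ are pairwise disjoint subsets of $V$, each of size at least $k-1$, so $(k-1)\,\bigl|Q_L^{(k)}\bigr| \le \sum_{e \in Q_L^{(k)}} |I(e)| = \bigl|\bigcup_{e} I(e)\bigr| \le |V|$, which is the claimed bound. I expect the disjointness step to be the only real obstacle, and it rests entirely on excluding green edges: for a green pair $[acbd]$ the two segments interlock, so their interiors need not be disjoint, which is exactly why the argument is confined to $Q_L^{(k)}$ rather than all of $\mathcal{P}_L^{(k)}$.
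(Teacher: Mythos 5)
Your proof is correct and takes essentially the same approach as the paper: both charge the $k-1$ interior points supplied by Fact~\ref{fact.level_k_inner} to each element of $Q_L^{(k)}$, note that isolation in the green subgraph together with the absence of red and purple elements at level $k$ forces distinct elements of $Q_L^{(k)}$ to be blue-related, and then use the orderings of Fact~\ref{fact.collinear} to conclude the interiors are pairwise disjoint. The only cosmetic difference is that the paper exhibits the merged collinear sequences ($[axbcyd]$, or $[bxayc]$ in the shared-endpoint case) and reads off distinctness directly, while you phrase the same step as a contradiction from a hypothetical common interior point.
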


\begin{proof}
It suffices to show that there are $|Q_L^{(k)}|$ pairwise disjoint sets, each with
 $k-1$ points, and altogether they have no more than $|V|$ points. Fact \ref{fact.level_k_inner} implies that each element in $Q_L^{(k)}$ has
$k-1$ ``inner'' points, so we just need to show that these inner points are all distinct.
Suppose $ab, cd \in Q_L^{(k)}$ and $[axb]$, $[cyd]$, we want to show that $x \neq y$.
Indeed, $ab$ and $cd$ are isolated vertices in $\mathcal{R}_L^{(k)}$,
so $ab \sim_b cd$.
By the categorization in Fact \ref{fact.relations},
We have either $[axbcyd]$, or, in the special case of $d=a$, $[bxayc]$.
It is clear in both cases $x \neq y$.
\end{proof}

\section{The structure of a green component}\label{sect.green_cpt}

In this section, we fix a line $L$ and an index $k$ with $2 \le  k \le h(L)-1$,
denote $\mathcal{U} = \mathcal{P}_L^{(k)}$,
and denote the green subgraph $\mathcal{R}_L^{(k)}$ by $\mathcal{R}$.

\begin{defi}
For a subset $\mathcal{W} \subseteq \mathcal{U}$,
we call a permutation $\pi$ of the set of its endpoints $V(\mathcal{W})$
a {\em collinear ordering} for $\mathcal{W}$ if $\pi$ is a collinear sequence.
For each pair $ab \in \mathcal{W}$,
where $a$ comes before $b$ in $\pi$,
we call $a$ the {\em opening point} of $ab$,
and $b$ the corresponding {\em closing point}.
Since any two pairs $ab, cd \in \mathcal{W}$
has either $ab \sim_b cd$ or $ab \sim_g cd$,
all the opening points are distinct;
we call the sequence of opening points,
sorted by their position in $\pi$ from the earliest to the latest,
the {\em opening sequence} of $\pi$.

Let $a$ be an opening point and $b$ be its corresponding closing point,
and $v$ be a point in $L = \ov{ab}$.
We say $v$ is on the {\em left side} of $a$ in $\pi$
if $[vab]$, otherwise (when $[avb]$ or $[abv]$) $v$ is on the {\em right side} of $a$.
We say $v$ is on the {\em left side} of $b$ in $\pi$
if $[vab]$ or $[avb]$, otherwise (when $[abv]$) $v$ is on the {\em right side} of $b$.
\end{defi}

We will prove the existence of a collinear ordering for every connected
subgraph of $\mathcal{R}$.
Before this, we first discuss some properties of such an ordering if
one exists, as we will need them in the inductive proof.

Lemma \ref{lem.insertion_order} analyzes the opening and closing points
on a collinear ordering, and the relation of a single point $v \in L$
to the opening-closing pairs.
In turn, Lemma \ref{lem.insertion} gives the shape of the line $L$
with respect to a collinear ordering.
(See Figure \ref{fig.green_cpt}.)

\begin{figure}[h!]
\begin{center}
\includegraphics[width=3.2in]{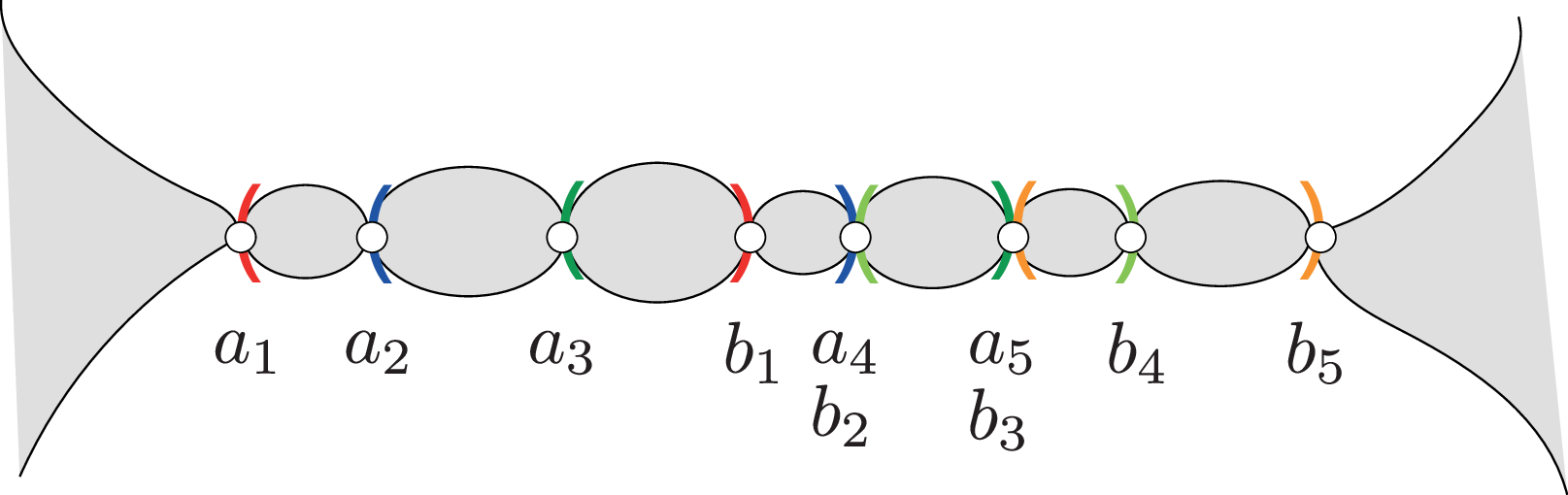}
\caption{The shaded area contains all points of $L$. 
In a collinear ordering, $a_1, a_2, \dots, a_5$ are the opening points,
$b_1, b_2, \dots, b_5$ are corresponding closing points.
$V = \ov{a_ib_i}$ for all $i=1, 2, 3, 4, 5$.}
\label{fig.green_cpt}
\end{center}
\end{figure}

\begin{lem}\label{lem.insertion_order}
Let $H$ be a connected subgraph of $\mathcal{R}$ with order at least 2,
$\pi$ a collinear ordering for the vertex set of $H$,
with $(a_1, a_2, \dots, a_t)$ as its opening sequence;
let $b_i$ be the corresponding closing point to $a_i$ $i = 1, 2, \dots, t$.
Then

(a) The $b_i$'s are distinct and their order in $\pi$, from left to the right, is $b_1, b_2, \dots, b_t$;

(b) $a_i b_i \sim_g a_{i+1}b_{i+1}$ for every $1 \le i < t$ and their order on $\pi$ is $a_i, a_{i+1}, b_i, b_{i+1}$,
and these four points are distinct;

(c) For every point $v \in L$, either $v$ appears on $\pi$, 
or $\pi$ has a partition $\pi = \sigma \circ \tau$ such that $v$
is on the right side of all points in $\sigma$ and on the left side of all
points in $\tau$. (Here $\sigma$ and $\tau$ can be the empty sequence.)
\end{lem}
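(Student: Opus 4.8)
The plan is to read each generating pair $a_ib_i$ as the interval it spans along $\pi$ and to exploit that, inside a single level, the only forbidden interval configuration is nesting. For part (a): since $\mathcal{U}=\mathcal{P}_L^{(k)}$ is an antichain (Fact~\ref{fact.antichain}), no two distinct vertices of $H$ stand in an ordered relation, so the nesting cases (o.2) and (o.3) of Fact~\ref{fact.relations} never occur. Hence for $i\neq j$ the intervals spanned by $a_ib_i$ and $a_jb_j$ are never nested, and they meet $\pi$ in one of the three patterns (b.1) touching, (b.2) disjoint, or (g) crossing. Knowing already that the opening points are distinct and listed in $\pi$-order as $a_1,\dots,a_t$, non-nesting forces the closing points to be distinct and to occur in the matching order $b_1,\dots,b_t$: if $a_i$ precedes $a_j$ but $b_j$ were weakly before $b_i$, the interval of $a_jb_j$ would sit inside that of $a_ib_i$, the excluded nesting. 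This is exactly (a).

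For part (b) I would use the connectivity of $H$. By (a) both endpoints of the intervals increase with the index, so if some consecutive pair failed to cross, i.e.\ $a_ib_i\sim_b a_{i+1}b_{i+1}$ (forcing $b_i$ to be weakly before $a_{i+1}$ on $\pi$), then every $a_pb_p$ with $p\le i$ satisfies $b_p\le b_i\le a_{i+1}\le a_q$ for every $q\ge i+1$, so $a_pb_p$ and $a_qb_q$ are touching or disjoint, never crossing. Thus no green edge would join $\{a_1b_1,\dots,a_ib_i\}$ to $\{a_{i+1}b_{i+1},\dots,a_tb_t\}$; since $1\le i<t$ both parts are nonempty, this contradicts the connectivity of $H$ in $\mathcal{R}$. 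Therefore every consecutive pair is green, and a green (crossing) pair has four distinct points occurring in the order $a_i,a_{i+1},b_i,b_{i+1}$, which is (b).

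For part (c), fix $v\in L$ and assume $v$ is not an endpoint of $H$; then $v\in\ov{a_ib_i}=L$ makes $v$ collinear with, and distinct from, each pair, so exactly one of $[va_ib_i]$ (call $v$ \emph{left} of the pair), $[a_ivb_i]$ (\emph{inside}), $[a_ib_iv]$ (\emph{right}) holds. Unwinding the definitions, ``$v$ is on the right side of a point $x$'' means $v$ is inside-or-right of the pair opened by $x$ when $x$ is an opening point, and $v$ is right of the pair closed by $x$ when $x$ is a closing point (when $x$ is simultaneously an opening and a closing point the two notions will be shown to agree). The reduction is: it suffices to prove that the set of points of $\pi$ on whose right side $v$ lies is a \emph{prefix} of $\pi$; then taking $\sigma$ to be that prefix and $\tau$ the remaining suffix gives the partition, because for every point $v$ lies on exactly one side (the three betweenness cases split cleanly into the two sides, using that $v$ is not an endpoint).

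To establish the prefix property I would prove a monotonicity statement: if $x$ precedes $y$ on $\pi$ and $v$ is on the right side of $y$, then $v$ is on the right side of $x$. Passing to the pairs through $x$ and $y$ and using the ordering $a_i<a_{i+1}<b_i<b_{i+1}$ from (b) together with Fact~\ref{fact.collinear}(c),(d),(e), one checks that $v$'s status (left/inside/right) is non-increasing as one moves rightward through the intervals; the one genuinely metric step is to rule out the configuration ``$v$ inside interval $i$ yet right of some interval lying to its right,'' which collapses by the triangle inequality. I expect this propagation to be the main obstacle: a single generating pair pins $v$ only to the whole, possibly branching, line $L$, so no one pair locates $v$. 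It is precisely the overlapping, crossing chain produced by (a) and (b) that transmits $v$'s position consistently from one pair to the next and yields the clean left/right cut; the same monotonicity settles the consistency of the two notions of side at a point that is at once an opening and a closing point.
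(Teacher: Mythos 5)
Your parts (a) and (b) are correct and take essentially the paper's own route: the antichain property of $\mathcal{P}_L^{(k)}$ (Fact \ref{fact.antichain}) excludes nested intervals, which gives (a); and if a consecutive pair were blue, every pair of index at most $i$ would end weakly before every pair of index at least $i+1$ begins, so the cut between the two groups would carry only blue edges, contradicting the green-connectivity of $H$ --- exactly the paper's argument for (b).

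Part (c), however, contains a genuine gap. You correctly reduce (c) to a monotonicity claim along consecutive points of $\pi$, but you never prove that claim: ``one checks that $v$'s status (left/inside/right) is non-increasing'' \emph{is} the content of the lemma, and it is what the paper spends a four-case analysis on. Worse, the step you single out as ``the one genuinely metric step'' --- ruling out ``$v$ inside interval $i$ yet right of some interval lying to its right'' --- is actually the easy sub-case: for disjoint intervals, $[a_ivb_i]$ and $[a_ib_ia_jb_j]$ give $[a_ivb_ia_jb_j]$ by Fact \ref{fact.collinear}(d), hence $[va_jb_j]$, which contradicts $[a_jb_jv]$ at once; this is one line in the paper's Case 4. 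The sub-case that genuinely resists local arguments is the one your sketch never isolates: $x=b_i$ and $y=a_j$ consecutive (so $[a_ib_ia_jb_j]$), with $v$ strictly outside both pairs, i.e.\ $[va_ib_i]$ and $[a_jb_jv]$. No triangle-inequality computation on these five points can refute this configuration, because it is realizable in a metric space: in the cycle $C_6$ on vertices $1,\dots,6$ with the graph metric, take $a_i=1$, $b_i=2$, $a_j=3$, $b_j=4$, $v=6$; then $[612]$, $[1234]$, and $[346]$ all hold. The refutation must therefore be global: it needs the crossing pair $a_{i+1}b_{i+1}$ supplied by (b) (so that $[a_ia_{i+1}b_ia_jb_{i+1}b_j]$), the fact that $v$ also lies on $\ov{a_{i+1}b_{i+1}}=L$, and a way to pin down which side of $a_{i+1}b_{i+1}$ the point $v$ is on. The paper does this by choosing $(x,y)$ to be the \emph{earliest} offending pair, which propagates ``$v$ is on the left side'' from $a_i$ across the interval to $a_{i+1}$, giving $[va_{i+1}b_{i+1}]$, hence $[va_jb_{i+1}]$; combined with $[a_jb_{i+1}b_j]$ and $[a_jb_jv]$ this yields $[a_jb_{i+1}v]$, a contradiction. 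Your closing remark that the crossing chain ``transmits $v$'s position'' is the right intuition, but without carrying out this minimality-plus-propagation argument (or an equivalent case analysis on $v$'s position relative to $a_{i+1}b_{i+1}$), part (c) --- and with it the lemma --- remains unproved.
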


\begin{proof} 
(a). We only need to show $b_i$ comes strictly before $b_{i+1}$
for $1 \le i < t$. Assume $b_{i+1}$ comes before $b_i$ or $b_{i+1} = b_i$,
then $a_ib_i$ and $a_{i+1}b_{i+1}$ have one of the ordered relations in
Fact \ref{fact.relations}, contradicts the fact that they have blue or green relation.

(b). By (a) and the definition of opening and closing points, 
among the four points, 
$a_i$ is the first and $b_{i+1}$ is the last.
Now if $b_i$ is before $a_{i+1}$ or $b_i=a_{i+1}$,
$a_j \ge b_k$ for all $j>i$ and $k \le i$,
so all the edges between $\{a_kb_k: k \le i\}$ and $\{a_jb_j: j > i\}$
are blue, which contradicts the fact that $H$ is connected by green edges.

(c). Suppose $v \in L$ but does not appear on $\pi$. 
We only need to show there cannot be two consecutive points $x$ and $y$ in $\pi$
such that $x$ is to the left of $y$, $v$ is on the left side of $x$ but on the right side of $y$.
Suppose the contrary and assume $x$ and $y$ is the earliest such a pair. We have the following cases. (Note that since a closing point can be the opening point for another pair, the cases might
overlap.)

\noindent {\em Case 1.} Both $x$ and $y$ are opening points. 
We have $x = a_i$ and $y = a_{i+1}$ for some $1 \le i < t$.
By (b) we have 
\begin{equation}\label{eq.insertion_order_01}
[a_i a_{i+1} b_i b_{i+1}]
\end{equation}
By definition, $v$ is on the left side of $x = a_i$, so we have
\begin{equation}\label{eq.insertion_order_02}
[v a_i b_i]
\end{equation}
\eqref{eq.insertion_order_01} and \eqref{eq.insertion_order_02}
imply
\begin{equation}\label{eq.insertion_order_03}
[v a_i a_{i+1} b_i].
\end{equation}
The point $v$ is on the right side of $y = a_{i+1}$,
so $[a_{i+1}vb_{i+1}]$ or $[a_{i+1}b_{i+1}v]$.
However, $[a_{i+1}vb_{i+1}]$ and $[a_{i}a_{i+1}b_{i+1}]$ in \eqref{eq.insertion_order_01}
imply $[a_{i}a_{i+1} v b_{i+1}]$, thus $[a_{i}a_{i+1} v]$, contradicts
$[va_ia_{i+1}]$ in \eqref{eq.insertion_order_03};
and $[a_{i+1}b_{i+1}v]$ and $[a_{i+1} b_i b_{i+1}]$
in \eqref{eq.insertion_order_01} imply $[a_{i+1}b_ib_{i+1}v]$,
thus $[a_{i+1}b_iv]$, contradicts $[va_{i+1}b_i]$ in
\eqref{eq.insertion_order_03}.

\noindent {\em Case 2.} Both $x$ and $y$ are closing points.
This is completely similar to the previous case.
Or, we can reverse $\pi$ and reduce this to the previous case.

\noindent {\em Case 3.} $x=a_j$ is an opening point and $y=b_i$ is a closing point.
 Since $a_j$ and $b_i$ are consecutive in $\pi$, by (a) and (b), we have $i<j$ and
\begin{equation}\label{eq.insertion_order_04}
[a_i a_{j} b_i b_{j}]
\end{equation}
on $\pi$.
$v$ is on the left side of $x=a_j$ and on the right side of $y=b_i$,
by definition, we have
\begin{equation}\label{eq.insertion_order_05}
[v a_j b_j] \;\; \mbox{and} \;\; [a_ib_iv].
\end{equation}
The first item in \eqref{eq.insertion_order_05} 
and $[a_jb_ib_j]$ in \eqref{eq.insertion_order_04}
imply $[v a_j b_i b_j]$, thus
\begin{equation}\label{eq.insertion_order_06}
[va_jb_i].
\end{equation}
The second item in \eqref{eq.insertion_order_05} 
and $[a_ia_jb_i]$ in \eqref{eq.insertion_order_04}
imply $[a_i a_j b_i v]$, thus $[a_jb_iv]$,
contradicts \eqref{eq.insertion_order_06}.

\noindent {\em Case 4.} $x=b_i$ is a closing point and $y=a_j$ is an opening point.
 Since $x$ and $y$ are consecutive in $\pi$, and by (a) and (b),
 it is easy to see $i+1<j$, and we have
\begin{equation}\label{eq.insertion_order_07}
[a_i a_{i+1} b_i a_j b_{i+1} b_j].
\end{equation}
Now $v$ is on the left side of $x=b_i$ and on the right side of $y = a_j$,
we have
\begin{equation}\label{eq.insertion_order_08}
\begin{split}
[va_ib_i] \;\; & \mbox{or} \;\; [a_ivb_i] \\
& \mbox {and} \\
[a_jb_jv] \;\; & \mbox{or} \;\; [a_jvb_j].
\end{split}
\end{equation}
$[a_i v b_i]$ and $[a_i b_i a_j b_j]$ in \eqref{eq.insertion_order_07}
imply $[a_i v b_i a_j b_j]$, 
thus $[va_jb_j]$, contradicts the fact that $v$ is on the right side of $a_j$.

$[a_j v b_j]$ and $[a_i b_i a_j b_j]$ in \eqref{eq.insertion_order_07}
imply $[a_i b_i a_j v b_j]$, 
thus $[a_ib_iv]$, contradicts the fact that $v$ is on the left side of $b_i$.

So, \eqref{eq.insertion_order_08} becomes
\begin{equation}\label{eq.insertion_order_09}
[va_ib_i] \;\; \mbox{and} \;\; [a_jb_jv]
\end{equation}
The first item above means $v$ is on the left side of $a_i$,
and by the minimality of $x$, $v$ is on the left side of all points
in $\pi$ between $a_i$ and $b_i$,
in particular, $v$ is on the left side of $a_{i+1}$,
i.e.,
$[va_{i+1}b_{i+1}]$.
This, together with $[a_{i+1} a_j b_{i+1}]$ in \eqref{eq.insertion_order_07}, 
implies $[va_{i+1} a_j b_{i+1}]$, thus
\begin{equation}\label{eq.insertion_order_10}
[va_jb_{i+1}].
\end{equation}
The second item in \eqref{eq.insertion_order_09} and
$[a_jb_{i+1}b_j]$ in \eqref{eq.insertion_order_07}
imply $[a_j b_{i+1} b_j v]$, thus $[a_j b_{i+1} v]$, contradicts
\eqref{eq.insertion_order_10}.
\end{proof}

The following lemma is well-known in graph theory.
\footnote{
This is pointed to us by Va\v{s}ek Chv\'atal --- If the graph $K_0$ with no vertices is considered connected, 
then the unique vertex of $K_1$ is considered not to be a cut point,
and the lower bound on the order of $G$ may be dropped in the lemma.
Arguments for and against declaring $K_0$ connected are presented on pages 42-43 of
\cite{HR}.
}

\begin{lem}\label{lem.non_cut_point}
Every connected graph $G$ with order at least 2 has a vertex that is not a cut vertex,
i.e., $G - v$ is still connected.
\end{lem}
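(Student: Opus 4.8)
The plan is to prove the well-known fact that every connected graph $G$ of order at least $2$ contains a non-cut vertex by exhibiting an explicit such vertex rather than arguing by contradiction. The natural candidate is a vertex that sits at the ``end'' of some spanning structure. Concretely, I would first pass to a spanning tree $T$ of $G$: since $G$ is connected and has at least two vertices, such a spanning tree exists and has at least two vertices, hence at least one leaf. The key observation is that a leaf of a spanning tree is never a cut vertex of the original graph $G$.

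To carry this out, let $v$ be any leaf of $T$ (a vertex of degree $1$ in $T$). I claim $G - v$ is connected. The argument is that $T - v$ is still a tree on the vertex set $V(G) \setminus \{v\}$: removing a leaf from a tree leaves a tree, so in particular $T - v$ is connected and spans $V(G) \setminus \{v\}$. Since $T - v$ is a subgraph of $G - v$ on the same vertex set, the graph $G - v$ contains a connected spanning subgraph and is therefore itself connected. This shows $v$ is not a cut vertex, completing the proof.

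The main thing to pin down carefully is the existence of the leaf and the fact that deleting a leaf preserves connectivity of the tree. For the former, I would invoke the standard fact that any finite tree with at least two vertices has at least two leaves (for instance, the two endpoints of a longest path are leaves); here a single leaf suffices, so even ``at least one leaf'' is enough. For the latter, the cleanest route is to use that a spanning tree on $m$ vertices has exactly $m-1$ edges, so removing a leaf $v$ deletes exactly one edge and one vertex, leaving $m-1$ vertices and $m-2$ edges; an acyclic graph (subgraph of a tree) on $m-1$ vertices with $m-2$ edges must be connected, hence a tree. Alternatively, any path in $T$ between two vertices other than $v$ cannot pass through the degree-$1$ vertex $v$ as an interior vertex, so all such paths survive in $T - v$.

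I do not anticipate a genuine obstacle, since this is a foundational graph-theory lemma; the only care needed is to ensure the order hypothesis $|V(G)| \ge 2$ is used exactly where required (to guarantee the spanning tree has a leaf), matching the remark in the footnote about the degenerate cases $K_0$ and $K_1$.
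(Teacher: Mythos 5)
Your proposal is correct and follows exactly the paper's argument: take a spanning tree $T$ of $G$, pick a leaf $v$ of $T$, note that $T-v$ remains a connected spanning subgraph of $G-v$, hence $G-v$ is connected. The extra details you supply (counting edges, or the longest-path argument for the existence of leaves) are just fuller justifications of the same steps the paper states more tersely.
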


\begin{proof}
Consider a spanning tree $T$ of $G$.
Being a tree with at least two vertices,
$T$ has at least two leaves.
Any leaf $v$ of $T$ has the property that $T-v$ is connected,
so $G-v$ is connected.
\end{proof}

\begin{lem}\label{lem.insertion}
For every connected subgraph $H$ of $\mathcal{R}$,
a collinear ordering $\pi$ for the vertex set of $H$,
and every point $v \in L$ that does not appear on $\pi$,
$v$ can be inserted into $\pi$ in a unique way to form a collinear sequence.
\end{lem}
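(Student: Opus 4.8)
The plan is to locate the unique insertion position using Lemma~\ref{lem.insertion_order}(c) and then verify that inserting $v$ there yields a collinear sequence. First I dispose of the case where $H$ has a single vertex $ab$: then $\pi=(a,b)$ and, since $v\in L=\ov{ab}$, exactly one of $[vab]$, $[avb]$, $[abv]$ holds (Fact~\ref{fact.collinear}(b)), which tells us to place $v$ before $a$, between $a$ and $b$, or after $b$. So assume $H$ has order at least $2$, so Lemma~\ref{lem.insertion_order} applies. Write $\pi=(x_1,\dots,x_m)$, let $(a_1,\dots,a_t)$ be its opening sequence and $(b_1,\dots,b_t)$ the corresponding closing points. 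Since $v$ does not appear on $\pi$, Lemma~\ref{lem.insertion_order}(c) gives a partition $\pi=\sigma\circ\tau$ with $\sigma=(x_1,\dots,x_j)$ and $\tau=(x_{j+1},\dots,x_m)$ such that $v$ is on the right side of every point of $\sigma$ and on the left side of every point of $\tau$; this $j$ is the candidate insertion point.

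For existence I must show $(x_1,\dots,x_j,v,x_{j+1},\dots,x_m)$ is a collinear sequence. Since $[x_1\cdots x_m]$ already holds, by Fact~\ref{fact.collinear}(d) it suffices to prove the single local relation $[x_j\,v\,x_{j+1}]$ (with the one-sided analogues $[v\,x_1\,x_2]$ and $[x_{m-1}\,x_m\,v]$ when $\sigma$ or $\tau$ is empty, treated the same way using the extreme pair $a_1b_1$ or $a_tb_t$). The first ingredient is a pair whose segment straddles the cut: because consecutive pairs overlap on $\pi$ (their order is $a_i,a_{i+1},b_i,b_{i+1}$ by Lemma~\ref{lem.insertion_order}(b)), at least one pair is open and not yet closed at every internal position, so there is an $a_ib_i$ with $a_i\in\sigma$ and $b_i\in\tau$; then ``$v$ right of $a_i$'' and ``$v$ left of $b_i$'' leave only $[a_i\,v\,b_i]$, so $v$ lies strictly between $a_i$ and $b_i$. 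The second ingredient pins $v$ against the two neighbours. Reading the side condition at $x_j$ with respect to \emph{its own} pair: if $x_j$ is an opening point $a_p$ then its closing point $b_p$ lies in $\tau$, and ``$v$ right of $a_p$'' together with ``$v$ left of $b_p$'' forces $[a_p\,v\,b_p]$; if $x_j$ is a closing point $b_p$, then ``$v$ right of $b_p$'' gives $[a_p\,b_p\,v]$. Either way $v$ is collinear with $x_j$ and lies to its right; symmetrically the condition at $x_{j+1}$ makes $v$ collinear with $x_{j+1}$ and to its left. Combining these relations with the backbone $[x_1\cdots x_m]$ and with $[a_i\,v\,b_i]$ via the betweenness calculus of Fact~\ref{fact.collinear}(c),(d),(e) yields $[x_j\,v\,x_{j+1}]$; this is routine but is split according to the opening/closing type of $x_j$ and $x_{j+1}$, the reversal $\pi\mapsto\pi^R$ (which swaps opening/closing and left/right) halving the cases.

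Uniqueness then follows cleanly. Once existence is known, $W:=V(H)\cup\{v\}$ is a collinear set, so by Fact~\ref{fact.two_ordering} it admits exactly two collinear orderings, $\mu$ and $\mu^R$. Any collinear insertion of $v$ into $\pi$ is a collinear ordering of $W$ that restricts to $\pi$ on $V(H)$; but $\mu$ and $\mu^R$ restrict to reverse orderings of $V(H)$, and at most one of these equals $\pi$, so the insertion, and hence the position of $v$, is forced.

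The step I expect to be the main obstacle is proving $[x_j\,v\,x_{j+1}]$. The difficulty is that knowing $v$ is between $a_i$ and $b_i$ for a single straddling pair does \emph{not} by itself make $v$ collinear with the intermediate points $x_j,x_{j+1}$; in a general metric space two points lying between the same pair need not be collinear with each other. What rescues the argument is that the cut supplies, for each of $x_j$ and $x_{j+1}$, an explicit betweenness relation through that point's own partner, witnessing that $v$ shares a geodesic with $x_j$ and with $x_{j+1}$, after which their order on the line is forced by Fact~\ref{fact.collinear}.
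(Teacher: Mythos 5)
Your internal case and your uniqueness argument are fine: the straddling pair exists because consecutive pairs interlock, the own-pair side conditions give the stated relations, and these do combine through Fact~\ref{fact.collinear} to yield $[x_j\,v\,x_{j+1}]$ (this is essentially the paper's Case~3, subcases 3.1--3.4; your composition route even avoids the distance-additivity computation the paper uses in its Case~3.3), and uniqueness via Fact~\ref{fact.two_ordering} is exactly the paper's argument. The gap is in the boundary cases where $\sigma$ or $\tau$ is empty. Your parenthetical claim that, when $\sigma$ is empty, Fact~\ref{fact.collinear}(d) reduces the problem to proving $[v\,x_1\,x_2]$ is false: Fact~(d) licenses inserting points \emph{between} two consecutive entries of an existing collinear sequence, not prepending or appending to it. In a general metric space $[vab]$ and $[abc]$ do not imply $[vabc]$ --- take $v,a,b,c$ to be four consecutive vertices of the $4$-cycle with graph metric: $\rho(v,b)=\rho(a,c)=2$ gives $[vab]$ and $[abc]$, yet $\rho(v,c)=1$. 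So establishing the local relation at the end of $\pi$ proves nothing about the whole prepended sequence.

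Worse, the boundary case cannot be ``treated the same way'' as the internal case, because your key ingredient vanishes there: when $\sigma$ is empty no pair straddles the cut --- every pair $a_ib_i$ lies entirely to the right of the insertion position, so all you have is the family of separate relations $[v\,a_i\,b_i]$, and these do not chain into $[v\,x_1\,x_2\cdots x_m]$ by a single application of Fact~(d). This is precisely the point where the paper's proof switches to induction on the order of $H$ (its Cases~1 and~2): remove the last pair $a_tb_t$ (Lemma~\ref{lem.insertion_order}(b) guarantees $H-a_tb_t$ is still connected), insert $v$ into the smaller ordering by the inductive hypothesis to get the collinear sequence $(v)\circ\pi'$, then append $b_t$ using $[v\,a_t\,b_{t-1}\,b_t]$ and re-insert $a_t$. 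One can also repair your non-inductive approach by chaining overlapping segments: from $[v\,a_i\,b_i]$ and the backbone, Fact~(d) gives $v$ prepended to the segment of $\pi$ from $a_i$ to $b_i$, and since consecutive segments overlap ($a_{i+1}$ precedes $b_i$), the distance equalities concatenate across all $i$. But some argument of this kind must actually be supplied; as written, your existence proof is incomplete exactly where the paper's proof does its real inductive work.
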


\begin{proof}
The uniqueness of the insertion position follows from Fact \ref{fact.two_ordering}.
We show the existence of the position by induction on the order of $H$. The base case when
$H$ has one vertex $e = ab$ is clear --- 
since $v \in L = \ov{ab}$, we have $[vab]$, $[avb]$, or $[abv]$.
Now assume $H$ has order at least two and the statement holds
for every connected subgraph of $H$ with a smaller order.
Let $(a_1, a_2, \dots, a_t)$ be the opening sequence for $\pi$,
and $b_i$ be the corresponding closing point to $a_i$.
Let $\sigma$ and $\tau$ be the partition of $\pi$
assured by Lemma \ref{lem.insertion_order} (c).

\noindent {\em Case 1.} $\sigma$ is the empty sequence.

So, $\tau = \pi$ and $v$ is on the left side of all the points
(especially all the opening points) in $\pi$.
Thus, 
\begin{equation}\label{eq.insertion_left_00}
[va_ib_i], \;\; \forall 1 \le i \le t.
\end{equation}
By Lemma \ref{lem.insertion_order} (b), $H - a_tb_t$ is connected.
The sub-sequence $\pi'$ of $\pi$ formed by $\{a_1, b_1, a_2, b_2, \dots, a_{t-1}, b_{t-1}\}$
is a collinear ordering for $H - a_tb_t$.
Thus, the inductive hypothesis assures that $v$ can be inserted into $\pi'$ to form
a collinear sequence $\pi''$. Since $v$ is on the $a_1$-side of $\{a_1, b_1\}$,
we conclude that
\begin{equation}\label{eq.insertion_left_01}
\mbox{$\pi'' = (v) \circ \pi'$ is a collinear sequence ends with $b_{t-1}$.}
\end{equation}
By Lemma \ref{lem.insertion_order} (b), $[a_t b_{t-1} b_t]$,
together with $[va_tb_t]$ in \eqref{eq.insertion_left_00}, we have $[va_tb_{t-1}b_t]$, so $[vb_{t-1}b_t]$,
then, by \eqref{eq.insertion_left_01} and Fact \ref{fact.collinear} (d),
\begin{equation}\label{eq.insertion_left_02}
\mbox{$(v) \circ \pi' \circ (b_t)$ is a collinear sequence.}
\end{equation}
By Lemma \ref{lem.insertion_order} (a) and (b),
$a_t$ is not the first point, nor among the last two points in $\pi$,
so either $a_t$ appears on $\pi'$, or there are consecutive points 
$x$ and $y$ on $\pi'$ such that $[xa_ty]$, then, by
Fact \ref{fact.collinear} (d), it can be inserted into the sequence in
\eqref{eq.insertion_left_02} to form a collinear sequence.
In either case, we have $\{v, a_1, b_1, \dots, a_t, b_t\}$ is a collinear set.
We choose a collinear sequence $\pi^*$ for the set where $a_1$ comes before
$a_2$ from Fact \ref{fact.two_ordering}.
Again by Fact \ref{fact.two_ordering}, $\pi$ must be a subsequence of $\pi^*$,
and since $[va_1b_1]$ in \eqref{eq.insertion_left_00}, so $\pi^* = (v) \circ \pi$.

\noindent {\em Case 2.} $\tau$ is the empty sequence.
This is symmetric to Case 1. In fact we can reverse $\pi$ and reduce this to Case 1.

\noindent {\em Case 3.} Both $\sigma$ and $\tau$ are non-empty.
Let $x$ be the last point in $\sigma$ and $y$ be the first point of $\tau$.
By Fact \ref{fact.collinear} (d), it is enough to show $[xvy]$.

\noindent {\em Case 3.1.} Both $x$ and $y$ are opening points.
We may assume $x = a_i$ and $y=a_{i+1}$ for some $1 \le i < t$.
By Lemma \ref{lem.insertion_order} (b) we have
\begin{equation}\label{eq.insertion_middle_01}
[a_i a_{i+1} b_i b_{i+1}]
\end{equation}
Since $v$ is on the left side of $y = a_{i+1}$, we have
$[v a_{i+1} b_{i+1}]$,
together with \eqref{eq.insertion_middle_01},
we get $[va_{i+1} b_i b_{i+1}]$ thus 
\begin{equation}\label{eq.insertion_middle_03}
[v a_{i+1} b_{i}]
\end{equation}
The point $b_{i}$ comes after $a_{i+1}$ in $\pi$, so it is in $\tau$.
Together with the fact $a_i$ is in $\sigma$, this implies $[a_ivb_i]$. And this, together with \eqref{eq.insertion_middle_03}, implies
$[a_iva_{i+1}b_i]$, so $[a_i v a_{i+1}]$, i.e., $[xvy]$.

\noindent {\em Case 3.2.} Both $x$ and $y$ are closing points.
This is completely symmetric to the previous sub-case. Again,
we may reverse $\pi$ and reduce to the previous one.

\noindent {\em Case 3.3.} $x=a_j$ is an opening point and $y=b_i$
is a closing point. 
In $\pi$, $x$ and $y$ are consecutive;
$a_i$ occurs before $b_i$, so it is before $a_j$,
$b_j$ occurs after $a_j$, so it is after $b_i$.
From here we may conclude
\begin{equation}\label{eq.insertion_middle_04}
[a_i a_j b_i b_j] \Rightarrow
\rho(a_i, b_i)+\rho(a_j, b_j) = \rho(a_i, b_j)+\rho(a_j, b_i),
\end{equation}
\begin{equation}\label{eq.insertion_middle_05}
a_i \in \sigma, b_i \in \tau \Rightarrow 
[a_ivb_i] \Rightarrow \rho(a_i, b_i) = \rho(a_i, v) + \rho(v, b_i),
\end{equation}
and 
\begin{equation}\label{eq.insertion_middle_06}
a_j \in \sigma, b_j \in \tau \Rightarrow 
[a_jvb_j] \Rightarrow \rho(a_j, b_j) = \rho(a_j, v) + \rho(v, b_j).
\end{equation}
Add the conclusions in \eqref{eq.insertion_middle_05} and \eqref{eq.insertion_middle_06},
then compare with the conclusion in \eqref{eq.insertion_middle_04}
we have
\begin{equation}\label{eq.insertion_middle_07}
(\rho(a_i, v) + \rho(v, b_j))+(\rho(a_j, v) + \rho(v, b_i)) = \rho(a_i, b_j) + \rho(a_j, b_i).
\end{equation}
But in every metric space, 
\begin{equation}\label{eq.insertion_middle_08}
\rho(a_i, v) + \rho(v, b_j) \ge \rho(a_i, b_j), \;\;
\rho(a_j, v) + \rho(v, b_i) \ge \rho(a_j, b_i).
\end{equation}
\eqref{eq.insertion_middle_07} means both inequalities in
\eqref{eq.insertion_middle_08} must take the equal sign.
In particular, $[a_j v b_i]$, i.e., $[xvy]$.

\noindent {\em Case 3.4.} $x=b_i$ is a closing point and $y=a_j$
is an opening point.
In $\pi$, $x$ and $y$ are consecutive, it is easy to conclude
\begin{equation}\label{eq.insertion_middle_09}
(a_i, a_{i+1}, b_i, a_j, b_{i+1}, b_j) \;\; \mbox{is a sub-sequence of $\pi$}.
\end{equation}
$a_i, a_{i+1}, b_i$ are in $\sigma$ and $a_j, b_{i+1}, b_j$ are in $\tau$,
so we have
\begin{equation}\label{eq.insertion_middle_10}
[a_i b_i v], [a_{i+1} v b_{i+1}], \; \mbox{and} \; [va_j b_j].
\end{equation}
$[a_i a_{i+1} b_i]$ reads from \eqref{eq.insertion_middle_09},
and with the first item in \eqref{eq.insertion_middle_10},
we get $[a_ia_{i+1}b_iv]$, so $[a_{i+1}b_iv]$,
together with the second item in \eqref{eq.insertion_middle_10}
we get
\begin{equation}\label{eq.insertion_middle_11}
[a_{i+1}b_ivb_{i+1}].
\end{equation}
$[a_jb_{i+1}b_j]$ reads from \eqref{eq.insertion_middle_09},
and with the last item in \eqref{eq.insertion_middle_10},
we get $[va_jb_{i+1}b_j]$, so $[va_jb_{i+1}]$.
$[va_jb_{i+1}]$ and \eqref{eq.insertion_middle_11}
implies $[a_{i+1}b_iva_jb_{i+1}]$,
so $[xvy]$.
\end{proof}

\begin{defi}
For every connected subgraph $H$ of $\mathcal{R}$,
a collinear ordering $\pi$ for the vertex set of $H$,
and every point $v \in L$ that does not appear on $\pi$,
we say $v$ is {\em outside} $H$ if $(v) \circ \pi$ or $\pi \circ (v)$ is a collinear sequence,
otherwise we say $v$ is {\em inside} $H$.
\end{defi}

Note that the term {\em outside} is consistent with the terminology introduced in Definition \ref{defi.collinear_set_triple}: 
if we view the pair $\{a, b\}$ as a one-vertex connected subgraph $H$ for the line $\ov{ab}$, 
then $v$ is outside this $H$ if and only if it is outside $\{a, b\}$ according to Definition \ref{defi.collinear_set_triple}.

\begin{lem}\label{lemma.P_line}
The vertex set of every connected subgraph of $\mathcal{R}$ has a collinear ordering.
\end{lem}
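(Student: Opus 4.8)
The plan is to induct on the order (number of vertices) of the connected subgraph $H$ of $\mathcal{R}$. For the base case, a single vertex $e = ab$ has vertex set $\{a, b\}$, and the length-one sequence $(a, b)$ is trivially a collinear sequence since $\rho(a,b) = \rho(a,b)$; this is the required collinear ordering. For the inductive step I assume $H$ has order at least $2$ and that every connected subgraph of strictly smaller order admits a collinear ordering.

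First I would apply Lemma \ref{lem.non_cut_point} to select a non-cut vertex $e = cd$ of $H$, so that $H' = H - e$ is connected and of smaller order; the inductive hypothesis then supplies a collinear ordering $\pi'$ for $V(H')$. Because $H$ is connected in the green graph $\mathcal{R}$, the removed vertex $e$ is green-adjacent to some vertex $f = ab$ of $H'$, and the green relation in Fact \ref{fact.relations} yields the interlocking $[acbd]$ once I orient the labels so that $a$ precedes $b$ in $\pi'$. The goal is to extend $\pi'$ to a collinear ordering of $V(H) = V(H') \cup \{c, d\}$ by inserting the endpoints $c$ and $d$.

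I would then split according to how many of $c, d$ already occur in $\pi'$ (this is possible, since two pairs in the same level may share an endpoint through a blue relation while still lying in one green component). If both already occur, then $V(H) = V(H')$ and $\pi'$ is itself the required ordering. If exactly one is missing, say $d$, then $d \in L$ and $d \notin \pi'$, so Lemma \ref{lem.insertion} inserts $d$ uniquely to produce a collinear sequence on $V(H') \cup \{d\} = V(H)$, settling this case. The substantive case is when neither $c$ nor $d$ appears in $\pi'$: here I first insert $c$ by Lemma \ref{lem.insertion} to obtain a collinear sequence $\pi_c$, in which the relation $[acb]$ (read off from $[acbd]$) places $c$ in some gap strictly between $a$ and $b$; separately I insert $d$ into $\pi'$ by Lemma \ref{lem.insertion} to locate its unique gap, and since $[abd]$ forces $d$ to the far side of $b$, that gap lies strictly after the position of $b$. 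I would then argue that performing both insertions simultaneously yields a collinear sequence $\pi^*$: the two gaps are separated by $b$, hence distinct, so every consecutive triple of $\pi^*$ is either a triple of $\pi'$, a triple verified when inserting $c$ alone, a triple verified when inserting $d$ alone, or the seam triple $(c, b, d)$, whose betweenness $[cbd]$ again comes from $[acbd]$. Chaining these local betweenness relations through Fact \ref{fact.collinear} (c) and (d) assembles the global relation, and Fact \ref{fact.two_ordering} certifies that $\pi^*$ is the unique such ordering up to reversal.

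I expect the main obstacle to be exactly this last case — inserting \emph{both} endpoints at once — because Lemma \ref{lem.insertion} only guarantees a valid insertion of a single point into a collinear ordering of a subgraph of $\mathcal{R}$, and after inserting one endpoint the sequence is no longer an ordering of such a subgraph, so the lemma cannot simply be applied twice in series. The crucial structural input is the interlocking $[acbd]$ supplied by the green edge $e \sim_g f$: it forces $c$ into a gap between $a$ and $b$ and $d$ into a gap beyond $b$, so that $b$ always lies between the two insertion points. This separation is what keeps the two insertions independent and rules out the only degenerate possibility (the two endpoints competing for a single gap), leaving nothing beyond the routine triple-by-triple verification of collinearity sketched above.
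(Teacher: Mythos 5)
Your proposal is correct and takes essentially the same route as the paper: induction using a non-cut vertex (Lemma \ref{lem.non_cut_point}), a green edge supplying the interlocking $[acbd]$, and two separate applications of Lemma \ref{lem.insertion} combined by showing the two insertion gaps are distinct (the paper argues the outside endpoint cannot land in the between endpoint's gap since that would contradict $[acd]$; you argue the gaps are separated by $b$ --- the same observation), after which the second point enters a gap that survives the first insertion, via Fact \ref{fact.collinear} (d). One small caution: state the final assembly purely through Fact \ref{fact.collinear} (d) applied to the surviving gap rather than ``triple-by-triple'' verification, since betweenness of all consecutive triples does not by itself imply global collinearity in a metric space; your gap-separation argument together with the full collinear sequences from Lemma \ref{lem.insertion} already provides the correct stronger input, so this is a matter of phrasing, not substance.
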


\begin{proof}
We prove this by induction on the order of the subgraph $H$. The base case when $H$ has
just one vertex is obvious. Now assume every connected subgraph of $H$ with a smaller order
has a collinear ordering.

By Lemma \ref{lem.non_cut_point}, there is a vertex $e = ab$ in $H$
such that $H-e$ is connected. By the inductive hypothesis, 
there is a collinear ordering $\pi$ for $H-e$.
By the connectedness of $H$, there is another vertex
$f = cd$ such that $ab \sim_g cd$,
we may assume $[acbd]$.
By Lemma \ref{lem.insertion},
$b$ can be inserted into $\pi$ to form a collinear sequence $\pi'$.
Since $[cbd]$, their order in $\pi'$ from left to the right is $c$, $b$, and $d$.
Let $x$ and $y$ be the left and right neighbour of $b$ in $\pi'$,
respectively.
$x$ and $y$ are consecutive in $\pi$, and both $x$ and $y$ are contained in the
range between $c$ and $d$ in $\pi$, inclusively.

By Lemma \ref{lem.insertion},
$a$ can be inserted into $\pi$ to form a collinear sequence $\pi''$.
$a$ can not be inserted into $\pi$ between $x$ and $y$ to form a collinear sequence,
otherwise $a$ is between $c$ and $d$, which contradicts the fact $[acd]$.
So $x$ and $y$ is still consecutive in $\pi''$. From $\pi'$ we see $[xby]$,
so $b$ can be inserted into $\pi''$ between $x$ and $y$ to form a collinear ordering.
\end{proof}

\begin{defi}
By Lemma \ref{lemma.P_line} and Fact \ref{fact.two_ordering},
the vertex set of every green component $\mathcal{C}$ in level $k$
has exactly two collinear orderings reverse each other.
We call one of them the {\em standard collinear ordering} for $\mathcal{C}$.
\end{defi}

\section{Proof of the main theorem}\label{sect.proof_main}

Our first lemma is a main ingredient in \cite{ACHKS},
a more rudimental form of this idea was first presented in \cite{KP},
we prove it here for completeness.

\begin{lem}\label{lem.long_path}
If $t$ distinct points $v_1, v_2, \dots, v_t$ satisfy
$[v_1v_2\dots v_t]$ in a metric space without a universal line,
then there are at least $t$ distinct lines.
\end{lem}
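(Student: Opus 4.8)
The plan is to produce $t$ pairwise distinct lines, all passing through a single point $w$ chosen off the line that carries the sequence, one line for each $v_i$.

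First I would observe that, since $[v_1 v_2 \dots v_t]$, the set $\{v_1, \dots, v_t\}$ is collinear, so every $v_i$ lies on $\ov{v_1 v_t}$. Because the space has no universal line, $\ov{v_1 v_t} \neq V$, and I can fix a point $w \in V \setminus \ov{v_1 v_t}$. My candidate lines are $\ov{w v_1}, \ov{w v_2}, \dots, \ov{w v_t}$; there are exactly $t$ of them, so it suffices to prove they are pairwise distinct.

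The crux is the following claim: \emph{if $w$ is collinear with two of the points $v_i, v_j$ (say $i<j$), then $w$ is collinear with all of $v_1, \dots, v_t$, and hence $w \in \ov{v_1 v_t}$.} Granting this, suppose $\ov{w v_i} = \ov{w v_j}$ for some $i \neq j$. Then $v_i$, $v_j$, and $w$ all lie on this common line, so $\{w, v_i, v_j\}$ is collinear; the claim then forces $w \in \ov{v_1 v_t}$, contradicting the choice of $w$. Thus the $t$ lines are distinct and we are done.

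To prove the claim I would argue that $w$ can be inserted into the collinear sequence $(v_1, \dots, v_t)$ to yield a longer collinear sequence still having $v_1$ and $v_t$ at its ends, whence $w \in \ov{v_1 v_t}$. Since $\{w, v_i, v_j\}$ is collinear, exactly one of $[v_i w v_j]$, $[w v_i v_j]$, $[v_i v_j w]$ holds (using Fact \ref{fact.collinear}(b) for uniqueness). In the first case, from $[v_1 \dots v_i v_j \dots v_t]$ (a collinear subsequence of the original, obtained by Fact \ref{fact.collinear}(d)) together with $[v_i w v_j]$, the general insertion rule Fact \ref{fact.collinear}(d) yields $[v_1 \dots v_i w v_j \dots v_t]$, so in particular $[v_1 w v_t]$. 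The remaining two cases are symmetric under reversal of $\pi$; treating $[w v_i v_j]$, the point $w$ lies on the same ray from $v_i$ as $v_1$, so $\{w, v_1, v_i\}$ is collinear with $v_i$ not between them, leaving $[v_1 w v_i]$ (insert $w$ directly) or $[w v_1 v_i]$. In the latter subcase a short step-down argument — comparing $\rho(w, v_i)=\rho(w,v_1)+\rho(v_1,v_i)$ with $\rho(v_1,v_i)=\rho(v_1,v_2)+\rho(v_2,v_i)$ and applying the triangle inequality to $\{w, v_2\}$ — forces $[w v_1 v_2]$, and iterating gives $[w v_1 v_2 \dots v_t]$. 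In every case $w$ joins a collinear sequence spanned by $v_1$ and $v_t$, so $w \in \ov{v_1 v_t}$. The main obstacle is exactly this last propagation: pinning down the betweenness of $w$ relative to the interior points requires the triangle inequality each time, and that is where all the metric content of the lemma resides.
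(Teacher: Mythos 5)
Your construction has a fatal gap at exactly the point you flag as the crux. The claim --- if $w\notin\ov{v_1v_t}$ is collinear with some pair $\{v_i,v_j\}$, then $w$ is collinear with all of $v_1,\dots,v_t$ --- is simply false in a general metric space, even one with no universal line. The inside case $[v_iwv_j]$ is fine, but in the outside case your key step, ``the point $w$ lies on the same ray from $v_i$ as $v_1$, so $\{w,v_1,v_i\}$ is collinear,'' has no justification: $[wv_iv_j]$ and $[v_1v_iv_j]$ (two points on the same side of $v_i$) do not force $\{w,v_1,v_i\}$ to be collinear, and no triangle-inequality ``step-down'' can repair this. Concretely, take $V=\{v_1,v_2,v_3,w,p\}$ with $\rho(v_1,v_2)=\rho(v_2,v_3)=\rho(w,v_2)=1$, $\rho(v_1,v_3)=\rho(w,v_3)=2$, $\rho(w,v_1)=3/2$, and $\rho(p,x)=10$ for every other point $x$ (the first four points even embed in the $L_1$ plane as $(0,0),(1,0),(2,0),(3/4,3/4)$). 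The only collinear triples are $\{v_1,v_2,v_3\}$ and $\{w,v_2,v_3\}$, so no line is universal: $\ov{v_2v_3}=\{v_1,v_2,v_3,w\}$ misses $p$, and every other line is smaller. Now $[v_1v_2v_3]$ holds with $t=3$, $\ov{v_1v_3}=\{v_1,v_2,v_3\}$, and $w$ is a legitimate choice of your external point; yet $[wv_2v_3]$ holds while $w\notin\ov{v_1v_3}$ (refuting the claim), and $\ov{wv_2}=\ov{wv_3}=\{w,v_2,v_3\}$, so your three candidate lines collapse to two. The lemma's conclusion still holds in this space, but your proof cannot establish it: some choices of external point (here $p$) happen to work, others (here $w$) do not, and nothing in your argument distinguishes them.

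This is precisely why the paper does not use one global deflecting point. It picks, for each \emph{consecutive} pair, a point $u_i\notin\ov{v_iv_{i+1}}$ and takes the lines $L_i=\ov{u_iv_i}$, $i=1,\dots,t-1$; each $L_i$ misses $v_{i+1}$, and any line containing all the $v_i$'s supplies a $t$-th distinct line. Non-collinearity with a consecutive pair is a local condition that propagates along the sequence: if $\{u_i,v_i,v_j\}$ were collinear for some $j>i$, then Fact \ref{fact.collinear}, applied with the neighbours $v_{i+1}$ and $v_{j+1}$, either makes $\{u_i,v_i,v_{i+1}\}$ collinear or places $v_{j+1}$ on $L_i=L_j$ and hence makes $\{u_j,v_j,v_{j+1}\}$ collinear --- both contradictions. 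Collinearity of a single far-away $w$ with a non-consecutive pair carries no such leverage, as the example shows; if you want to keep a ``one line per $v_i$'' count, you must let the deflecting point vary with $i$ as the paper does.
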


\begin{proof}
For every $i$ such that $1 \le i < t$, pick a point $u_i \in V \setminus \ov{v_iv_{i+1}}$. (It is possible that $u_i = u_j$ for $i \neq j$.)
We claim that $L_i = \ov{u_iv_i}$ ($i=1, 2, \dots, t-1$) are all distinct lines. 
First of all, $u_i \not\in \ov{v_iv_{i+1}}$, so $\{u_i, v_i, v_{i+1}\}$ is not collinear,
so $v_{i+1} \not\in L_i$ for each $i$. For $1 \le i < j < t$,
suppose $L_i = L_j$, so $\{u_i, v_i, v_j\}$ is collinear.
By Fact \ref{fact.collinear}, $[u_iv_iv_j]$ and $[v_iv_{i+1}v_j]$ imply $[u_iv_iv_{i+1}]$;
$[v_iv_ju_i]$ and $[v_iv_{i+1}v_j]$ also imply $[u_iv_iv_{i+1}]$;
finally, $[v_iu_iv_j]$ and $[v_iv_jv_{j+1}]$ imply $[v_iu_iv_{j+1}]$,
so $v_{j+1} \in L_i = L_j$, this means $\{v_{j+1}, v_j, u_j\}$ is collinear,
contradicts the fact that $u_j \not\in \ov{v_jv_{j+1}}$.
\end{proof}

\begin{defi}
Let $L$ be a line in a metric space $(V, \rho)$ without a universal line,
let $k$ be index with $2 \le k \le h(L)-1$,  
let $\mathcal{C}$ be a green component in level $k$ of $\mathcal{P}_L$,
and let $(a_1, a_2, \dots, a_t)$ be the opening sequence of $\mathcal{C}$'s standard collinear ordering.
For every $i$ such that $1 \le i < t$, $\ov{a_ia_{i+1}} \neq V$, so we can pick (any) $u_i \in V$
such that $\{u_i, a_i, a_{i+1}\}$ is not collinear. Set $L_i = \ov{u_i a_{i+1}}$
so $a_i \not\in L_i$.

We call $u_i$ the $i$-th special point and $L_i$ the $i$-th special line
with respect to the green component $\mathcal{C}$.
\end{defi}

Note that the $u_i$'s may well be repeated for different index $i$.
However, now we are going to show that the $L_i$'s are distinct.

\begin{lem}\label{lem.distinct_lines_samecpt}
For a line $L$ in a metric space $(V, \rho)$ without a universal line,
a level index $k$ with $2 \le k \le h(L)-1$,
and any green component $\mathcal{C}$ in level $k$ of $\mathcal{P}_L$,
all the special lines with respect to $\mathcal{C}$ are distinct. 
\end{lem}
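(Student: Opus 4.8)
The plan is to reduce the whole statement to Lemma~\ref{lem.long_path}. The only property of the green component $\mathcal{C}$ that I actually need is that its opening sequence $(a_1, a_2, \dots, a_t)$ is a collinear sequence, i.e.\ $[a_1 a_2 \cdots a_t]$. This is immediate: by definition the opening points occur in the standard collinear ordering $\pi$ (which exists by Lemma~\ref{lemma.P_line}) in exactly this order, and the restriction of a collinear sequence to a subset of its points, keeping their relative order, is again collinear by Fact~\ref{fact.collinear}(d). None of the finer interlocking geometry of Lemma~\ref{lem.insertion_order}, nor the closing points, is needed for this lemma; all of that machinery is only what guarantees the collinear ordering exists in the first place.

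With $[a_1 \cdots a_t]$ in hand, I would observe that the special lines are constructed exactly as in the proof of Lemma~\ref{lem.long_path}, read along this collinear sequence, with the single cosmetic difference that each $u_i$ is attached to the right endpoint $a_{i+1}$ of the consecutive pair $\{a_i,a_{i+1}\}$ rather than to its left endpoint. The clean way to make this precise is to reverse the sequence: set $v_s = a_{t+1-s}$ and $w_s = u_{t-s}$. Then $[v_1 \cdots v_t]$ is collinear by Fact~\ref{fact.collinear}(a), the choice of $u_i$ gives $w_s \notin \ov{v_s v_{s+1}}$, and $L_{t-s} = \ov{w_s v_s}$ for $s = 1, \dots, t-1$. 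Thus the family $\{L_1, \dots, L_{t-1}\}$ is precisely the family $\{\ov{w_s v_s}\}_{s=1}^{t-1}$ whose members are shown to be pairwise distinct inside the proof of Lemma~\ref{lem.long_path}; this gives the result.

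If I wanted a self-contained argument rather than quoting the construction inside the earlier proof, I would rerun the same three-case argument directly on $[a_1 \cdots a_t]$, where the role of the key fact ``$v_{i+1} \notin L_i$'' is played by ``$a_i \notin L_i$'' (valid because $\{u_i, a_i, a_{i+1}\}$ is non-collinear by the choice of $u_i$). Assuming $L_p = L_q$ with $p < q$, the trivial membership $a_{p+1} \in L_p = L_q = \ov{u_q a_{q+1}}$ makes $\{u_q, a_{p+1}, a_{q+1}\}$ collinear; combining the three possible orderings of this triple with the relations $[a_{p+1} a_q a_{q+1}]$ and $[a_p a_{p+1} a_{q+1}]$ read off from $[a_1 \cdots a_t]$ forces either $a_q \in L_q$ or $a_p \in L_p$, each contradicting the key fact. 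The degenerate case $q = p+1$, in which $[a_{p+1} a_q a_{q+1}]$ collapses, is handled even more directly, since then $a_q = a_{p+1} \in L_p = L_q$ already contradicts $a_q \notin L_q$.

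Since there is no genuinely new idea here beyond Lemma~\ref{lem.long_path}, I expect the only real hazard to be bookkeeping: getting the direction of the reflection right and not overlooking the boundary case $q = p+1$. That is exactly why I would prefer to route the argument through the explicit relabeling $v_s = a_{t+1-s}$, $w_s = u_{t-s}$ and then invoke Lemma~\ref{lem.long_path} as a black box, leaving no room for an index slip.
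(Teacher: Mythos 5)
Your proof is correct, and it is organized differently from the paper's. The paper argues from scratch: assuming $L_i=L_j$ with $i<j$, it uses the membership $a_{j+1}\in L_j=L_i$ to obtain the collinear triple $\{u_i,a_{i+1},a_{j+1}\}$, and a three-case analysis against the relations $[a_ia_{i+1}a_{j+1}]$ and $[a_{i+1}a_ja_{j+1}]$ (read off the opening sequence, just as you do) yields a contradiction with $a_i\notin L_i$ or $a_j\notin L_j$. Your self-contained fallback is exactly the mirror image of this argument --- the triple $\{u_q,a_{p+1},a_{q+1}\}$ coming from $a_{p+1}\in L_p=L_q$ --- and it checks out; your explicit treatment of the degenerate case $q=p+1$ is in fact cleaner than the paper's parenthetical ``no matter $i+1=j$ or not''. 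Your preferred route, reversing the opening sequence via $v_s=a_{t+1-s}$, $w_s=u_{t-s}$ so that the special lines become precisely the lines $\ov{w_sv_s}$ constructed in the proof of Lemma~\ref{lem.long_path}, is a genuine economy that the paper does not exploit, and the relabeling is verified correctly: $w_s\notin\ov{v_sv_{s+1}}$ follows from the choice of $u_{t-s}$, and $\ov{w_sv_s}=L_{t-s}$, so the two families of lines coincide. The one point to flag is that Lemma~\ref{lem.long_path} cannot literally be used ``as a black box'': its statement only asserts that $t$ distinct lines exist, whereas you need the stronger claim established inside its proof, namely that for \emph{every} admissible choice of the points $u_i$ the particular lines $\ov{u_iv_i}$, $1\le i<t$, are pairwise distinct. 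You are clearly aware of this distinction (you cite the family ``shown to be pairwise distinct inside the proof''), and since you also supply the direct case analysis there is no gap; but in a polished write-up one should either restate Lemma~\ref{lem.long_path} in this stronger form before invoking it, or keep the direct argument, which is the option the paper takes.
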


\begin{proof}
Denote $(a_1, a_2, \dots, a_t)$ the opening sequence of 
the standard collinear ordering of $\mathcal{C}$;
denote $u_i$ the $i$-th special element and $L_i$ the $i$-th special line $\ov{u_i a_{i+1}}$.
Suppose the contrary that there are $i<j$ such that $L_i = L_j$. So $a_{j+1} \in L_j = L_i$,
this means $\{u_i, a_{i+1}, a_{j+1}\}$ is collinear.

\noindent {\em Case 1.} $[a_{i+1}u_i a_{j+1}]$. In $\pi$ we have $[a_ia_{i+1}a_{j+1}]$,
so $[a_ia_{i+1}u_i a_{j+1}]$, which implies $\{a_i, a_{i+1}, u_i\}$, 
contradicts the definition of $u_i$.

\noindent {\em Case 2.} $[u_i a_{i+1} a_{j+1}]$ (respectively $[a_{i+1} a_{j+1} u_i]$). 
In $\pi$ we have $[a_{i+1} a_j a_{j+1}]$ 
(no matter $i+1 = j$ or not), so $[u_i  a_{i+1} a_j a_{j+1} ] $ (respectively $[a_{i+1} a_j a_{j+1} u_i]$),
which implies $\{a_j, u_i, a_{i+1}\}$ is collinear, 
and $a_j \in L_i = L_j$, contradicts the definition of $L_j$.
\end{proof}

\begin{lem}\label{lem.distinct_lines_diffcpt}
For a line $L$ in a metric space $(V, \rho)$ without a universal line,
a level index $k$ with $2 \le k \le h(L)-1$, any two special lines
with respect to two different components in level $k$ of $\mathcal{P}_L$ are distinct. 
\end{lem}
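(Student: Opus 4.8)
The plan is to assume, for contradiction, that a special line $L_i=\ov{u_i a_{i+1}}$ of one component $\mathcal C$ (with standard opening sequence $a_1,\dots,a_t$) equals a special line $L'_j=\ov{u'_j a'_{j+1}}$ of a different component $\mathcal C'$ (with standard opening sequence $a'_1,\dots,a'_{t'}$) in the same level $k$; call this common line $M$. Then $a'_{j+1}\in M=\ov{u_i a_{i+1}}$, so $\{u_i,a_{i+1},a'_{j+1}\}$ is collinear, while $\{u_i,a_i,a_{i+1}\}$ and $\{u'_j,a'_j,a'_{j+1}\}$ are non-collinear by the definitions of $u_i$ and $u'_j$. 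The argument of Lemma \ref{lem.distinct_lines_samecpt} then goes through essentially verbatim once I have the two betweenness relations $[a_i a_{i+1} a'_{j+1}]$ and $[a_{i+1} a'_j a'_{j+1}]$ (after fixing, without loss of generality, which component lies ``to the left''). So the real content is to establish these relations, i.e. to show that two distinct green components of the same level do not interleave along $L$.

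The structural heart is a separation claim. First I would note that any generating pair of $\mathcal C$ and any generating pair of $\mathcal C'$ are in the blue relation: they lie in the same level, hence form an antichain (Fact \ref{fact.antichain}), ruling out the ordered relations; they lie in different green components, ruling out the green relation; and since $2\le k\le h(L)-1$ there are no red or purple elements in this level (Facts \ref{fact.red_element} and \ref{fact.purple_element}), so by Fact \ref{fact.relations} only blue remains. Next, writing $\pi$ for the standard collinear ordering of $\mathcal C$, I would prove that \emph{no point of $V(\mathcal C')$ lies inside $\mathcal C$}: every interior gap of $\pi$ lies strictly between the two endpoints $a_m,b_m$ of some pair of $\mathcal C$ (by Lemma \ref{lem.insertion_order}(a),(b)), so a point of $\mathcal C'$ inserted there (via Lemma \ref{lem.insertion}) would be strictly between $a_m$ and $b_m$; checking the blue cases of Fact \ref{fact.relations} against the pair of $\mathcal C'$ carrying that point shows this configuration is impossible. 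By symmetry, no point of $V(\mathcal C)$ lies inside $\mathcal C'$.

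Then I would show that all of $V(\mathcal C')$ lies on the \emph{same} side of $\pi$. If some $w$ were outside $\mathcal C$ on the left and some $w'$ outside on the right, then $(w)\circ\pi$ and $\pi\circ(w')$ being collinear give $[w\,a_1\,b_t]$ and $[a_1\,b_t\,w']$, whence $[w\,a_1\,w']$ by Fact \ref{fact.collinear}; since $w,w'\in V(\mathcal C')$ both appear on the collinear ordering of $\mathcal C'$ (Lemma \ref{lemma.P_line}), this places the point $a_1$ of $\mathcal C$ strictly between two points of $\mathcal C'$, contradicting the previous paragraph. A symmetric comparison rules out $\mathcal C$ and $\mathcal C'$ each lying to the right of the other, so after possibly swapping names all of $V(\mathcal C')$ is outside $\mathcal C$ on the right and, consistently, all of $V(\mathcal C)$ is outside $\mathcal C'$ on the left. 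Now $\pi\circ(a'_{j+1})$ is collinear, yielding $[a_i a_{i+1} a'_{j+1}]$, and $(a_{i+1})\circ\pi'$ is collinear, yielding $[a_{i+1} a'_j a'_{j+1}]$. Feeding these into the case analysis of Lemma \ref{lem.distinct_lines_samecpt} --- splitting on whether $u_i$ satisfies $[a_{i+1}u_i a'_{j+1}]$ or is outside $\{a_{i+1},a'_{j+1}\}$ --- produces either $[a_i a_{i+1} u_i]$, contradicting the choice of $u_i$, or $a'_j\in\ov{u'_j a'_{j+1}}$, contradicting the choice of $u'_j$, completing the proof.

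I expect the main obstacle to be exactly the non-interleaving claim above, together with the degenerate cases where the two components share a point. Such a shared point can only be the single ``junction'' where the two spans meet, and one must verify that the relevant opening points $a_{i+1}$ and $a'_{j+1}$ can be taken distinct from it so that the betweenness relations $[a_i a_{i+1} a'_{j+1}]$ and $[a_{i+1} a'_j a'_{j+1}]$ still hold; handling these boundary insertions carefully, rather than the final case analysis, is where the work lies.
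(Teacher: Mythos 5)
Your argument breaks at the step where you claim that ``a symmetric comparison rules out $\mathcal C$ and $\mathcal C'$ each lying to the right of the other,'' so that after renaming you may assume both $[a_i\,a_{i+1}\,a'_{j+1}]$ and $[a_{i+1}\,a'_j\,a'_{j+1}]$. The orientation of each \emph{standard} collinear ordering is an arbitrary choice between a sequence and its reverse, the special lines are defined relative to that fixed choice, and swapping the names of the two components does not change either choice. So the mismatched case genuinely occurs: take points on the real line with $V(\mathcal C)\subseteq[0,10]$, $\pi$ running left-to-right, and $V(\mathcal C')\subseteq[20,30]$ with standard ordering $\pi'$ running right-to-left; then $\pi\circ(w)$ is collinear for every $w\in V(\mathcal C')$ \emph{and} $\pi'\circ(p)$ is collinear for every $p\in V(\mathcal C)$. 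In this case what you actually obtain is $[a_{i+1}\,a'_{j+1}\,a'_j]$, not $[a_{i+1}\,a'_j\,a'_{j+1}]$, and your final case analysis collapses: in the subcase $[u_i\,a_{i+1}\,a'_{j+1}]$ you would have to concatenate this triple with $[a_{i+1}\,a'_{j+1}\,a'_j]$, but two collinear triples sharing only an adjacent pair do not concatenate in a general metric space --- Fact~\ref{fact.collinear}(c),(d) require the shared pair to enclose the inserted points; e.g.\ in the cycle $C_6$ one has $[123]$ and $[235]$ but not $[1235]$. Running your analysis from $\mathcal C'$'s side fails for the same reason, since the relation $[a'_{j+1}\,a_i\,a_{i+1}]$ it would need also fails in the mismatched case. (By contrast, the shared-point issue you flag as the main obstacle is comparatively minor.)

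Repairing this requires an ingredient your proposal never uses: the closing points and the line $L$ itself. This is exactly how the paper proceeds. Its proof of this lemma establishes only that the single point $a_{i+1}$ is outside $\mathcal C'$ (via the blue relation between $a_ib_i$ and every $a'_jb'_j$, plus Lemma~\ref{lem.insertion}), and then invokes Lemma~\ref{lem.distinct_lines_exclusive}: no special line of $\mathcal C'$ contains a point of $L$ outside $\mathcal C'$; since $a_{i+1}\in L_i$, this forces $L_i\neq L'_j$. That lemma's proof is orientation-free precisely because, when the special point lies outside the relevant pair, it glues with the \emph{closing} point $b'_{j+1}$ to conclude $u'_j\in\ov{a'_{j+1}b'_{j+1}}=L$, whence Lemma~\ref{lem.insertion} makes $\{u'_j,a'_j,a'_{j+1}\}$ collinear, a contradiction. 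Your separation claim is true (it is essentially Fact~\ref{fact.P_line_plus}, which the paper states but deliberately avoids needing), and your matched-orientation analysis is fine, but without the ``$u\in L$'' argument the proof is incomplete.
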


\begin{proof}
Let $\mathcal{C}$ and $\mathcal{C}'$ be two different components. Let $(a_1, a_2, \dots, a_t)$ be the opening sequence of the standard collinear ordering
of $\mathcal{C}$, 
$b_i$ be the corresponding closing point to $a_i$;
let $u_i$ be its $i$-th special element and 
$L_{i} = u_ia_{i+1}$ be the $i$-th special line.
Let $\pi'$ be a standard collinear ordering of $\mathcal{C}'$,
with opening sequence $(a'_1, a'_2, \dots, a'_s)$,
$b'_i$ be the corresponding closing point to $a'_i$.
By Lemma \ref{lem.insertion_order} (b),
$[a_i a_{i+1} b_i]$. Since $a_ib_i$ and $a'_jb'_j$ are in different
green components, we have $a_ib_i \sim_b a'_jb'_j$,
together with $[a_i a_{i+1} b_i]$, we get 
\begin{equation}\label{eq.diffcpt_01}
\mbox{$a_{i+1}$ is outside $\{a'_j b'_j\}$ for every $j$ such that $1\le j \le s$.}
\end{equation}
Especially, $a_{i+1}$ does not appear on $\pi'$.
By Lemma \ref{lem.insertion}, $a_{i+1}$ can be inserted into
$\pi'$  to get a collinear sequence $\pi^*$.
If $a_{i+1}$ is not the first point on $\pi^*$,
let $j$ be the largest index such that $a'_j$ is earlier than $a_{i+1}$ in $\pi^*$.
If $j<s$, by the maximality of $j$, $a'_{j+1}$ is after $a_{i+1}$, and
by Lemma \ref{lem.insertion_order} (b), $b'_{j}$ is after $a'_{j+1}$
so $a_{i+1}$ is between $a'_j$ and $b'_j$ in $\pi^*$, which contradicts  \eqref{eq.diffcpt_01}.
So $j=s$, and by \eqref{eq.diffcpt_01}, $b'_s$ is also earlier than $a_{i+1}$.

In a word, $a_{i+1}$ is either the first point or the last point in $\pi^*$.
So $a_{i+1}$ is outside $\mathcal{C}'$. The lemma follows from the next stronger lemma: $a_{i+1}$ is a point of $L$ that is outside $\mathcal{C}'$, so no special line with respect to the second component contains $a_{i+1}$.
\end{proof}

\begin{lem}\label{lem.distinct_lines_exclusive}
For a line $L$ in a metric space $(V, \rho)$ without a universal line,
a level index $k$ with $2 \le k \le h(L)-1$,
and a green component $\mathcal{C}$ in level $k$,
a special line with respect to $\mathcal{C}$ does not contain any point 
$v \in L$ but outside $\mathcal{C}$. 
\end{lem}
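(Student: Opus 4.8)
The plan is to fix a green component $\mathcal{C}$ in level $k$ with standard collinear ordering $\pi$ and opening sequence $(a_1, \dots, a_t)$, fix an index $i$ with $1 \le i < t$, and take the $i$-th special line $L_i = \ov{u_i a_{i+1}}$. We must show that if $v \in L$ is outside $\mathcal{C}$, then $v \notin L_i$, i.e.\ $\{v, u_i, a_{i+1}\}$ is \emph{not} collinear. The key structural facts we have available are Lemma \ref{lem.insertion_order}(b), which gives $[a_i a_{i+1} b_i b_{i+1}]$ and in particular $[a_i a_{i+1} b_i]$, and the definition of $u_i$, namely that $\{u_i, a_i, a_{i+1}\}$ is not collinear.

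\textbf{Locating $v$ via the outside condition.}
First I would unpack what ``$v$ is outside $\mathcal{C}$'' buys us. By definition, since $v \in L$ does not appear on $\pi$ (a point appearing on $\pi$ is an endpoint of some pair in $\mathcal{C}$, and we are assuming $v$ is genuinely outside), Lemma \ref{lem.insertion} inserts $v$ uniquely, and ``outside'' means $(v)\circ\pi$ or $\pi\circ(v)$ is collinear. By reversing $\pi$ if necessary I may assume $(v)\circ\pi$ is a collinear sequence, so $v$ precedes every point of $\mathcal{C}$; in particular
\[
[v\, a_{i+1}\, b_{i+1}] \quad\text{and}\quad [v\, a_i\, a_{i+1}].
\]
This places $v$ strictly to the left of the whole configuration, which is exactly the leverage I need to reach a contradiction with the non-collinearity of $\{u_i, a_i, a_{i+1}\}$.

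\textbf{The main argument.}
Now suppose for contradiction that $v \in L_i$, so $\{v, u_i, a_{i+1}\}$ is collinear: one of $[v\, a_{i+1}\, u_i]$, $[a_{i+1}\, v\, u_i]$, or $[v\, u_i\, a_{i+1}]$ holds (with $u_i$ possibly replaced by symmetric role). I would combine each such possibility with $[v\, a_i\, a_{i+1}]$ using Fact \ref{fact.collinear}(c),(d) to deduce that $a_i$, $a_{i+1}$, and $u_i$ all lie on a common collinear sequence anchored by $v$ — for instance, if $[v\, a_{i+1}\, u_i]$, then together with $[v\, a_i\, a_{i+1}]$ we get $[v\, a_i\, a_{i+1}\, u_i]$, whence $[a_i\, a_{i+1}\, u_i]$, contradicting that $\{u_i, a_i, a_{i+1}\}$ is not collinear. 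The remaining orderings of $u_i$ relative to $\{v, a_{i+1}\}$ should each collapse, via the same concatenation with $[v a_i a_{i+1}]$, to a statement that $\{u_i, a_i, a_{i+1}\}$ is collinear, which is the forbidden outcome. I expect the main obstacle to be the bookkeeping case analysis on where $v$ and $u_i$ sit relative to $a_{i+1}$: one must be careful that the ``betweenness'' position of $u_i$ is not known a priori, so every one of the three collinear patterns for $\{v,u_i,a_{i+1}\}$ must be handled, and each reduced (using that $v$ is to the left of the entire component) to contradicting the defining property of $u_i$. Since all three reductions use only Fact \ref{fact.collinear} and the two betweenness relations above, no subtle metric inequality is needed, so the argument should be short once the cases are organized.
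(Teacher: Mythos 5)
Your overall strategy (push $v$ to one end of $\pi$, then do a case analysis on the collinear pattern of $\{v,u_i,a_{i+1}\}$) starts out like the paper's, but there is a genuine gap, and it is precisely the case your proposal waves off. When $u_i$ lies \emph{between} $v$ and $a_{i+1}$, i.e.\ $[v\,u_i\,a_{i+1}]$, the relations at your disposal are $[v\,u_i\,a_{i+1}]$ and $[v\,a_i\,a_{i+1}]$: both $u_i$ and $a_i$ sit between $v$ and $a_{i+1}$, and Fact \ref{fact.collinear} gives no way to order them relative to one another, hence no way to conclude that $\{u_i,a_i,a_{i+1}\}$ is collinear. It genuinely need not follow from these betweenness relations alone: in the plane with the $L_1$ metric, two points inside the "rectangle" spanned by $v$ and $a_{i+1}$ are each between $v$ and $a_{i+1}$, yet the two of them together with $a_{i+1}$ can form an equilateral (hence non-collinear) triple. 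So "the same concatenation with $[v\,a_i\,a_{i+1}]$" does not close this case, and your claim that all three patterns reduce via Fact \ref{fact.collinear} alone is false. The paper closes it with an idea absent from your proposal: combine $[v\,u_i\,a_{i+1}]$ with $[v\,a_{i+1}\,b_{i+1}]$ (using the closing point on the \emph{far} side of $a_{i+1}$) to get $[v\,u_i\,a_{i+1}\,b_{i+1}]$, hence $u_i\in\ov{a_{i+1}b_{i+1}}=L$ because $a_{i+1}b_{i+1}$ is a generating pair of $L$; then Lemma \ref{lem.insertion} says $u_i$ either appears on $\pi$ or can be inserted into $\pi$ to form a collinear sequence, which forces $\{u_i,a_i,a_{i+1}\}$ to be collinear after all --- the desired contradiction. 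This membership-in-$L$ step plus the insertion lemma is the essential missing ingredient.

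A second, smaller defect: "by reversing $\pi$ if necessary I may assume $(v)\circ\pi$ is collinear" is not a legitimate reduction. The special line $L_i=\ov{u_ia_{i+1}}$ is anchored at an \emph{opening} point of the standard ordering, and reversal swaps the roles of opening and closing points, so the configuration is not symmetric under it. If in fact $\pi\circ(v)$ is the collinear sequence, the relations you obtain are $[a_i\,a_{i+1}\,v]$ and $[a_{i+1}\,b_{i+1}\,v]$ --- now $b_{i+1}$, not $a_i$, is the point between $a_{i+1}$ and $v$ --- and the sub-cases that close by pure concatenation versus those needing the insertion argument are different ones. The paper sidesteps this by arguing orientation-free: exactly one of $\{a_i,b_{i+1}\}$ lies between $v$ and $a_{i+1}$ and the other lies outside, and whichever point $x$ completes a collinear quadruple with $\{u_i,v,a_{i+1}\}$ leads to one of the two branches ($x=a_i$: direct contradiction with the choice of $u_i$; $x=b_{i+1}$: $u_i\in L$ and then the insertion argument). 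Your write-up would need both repairs to become a proof.
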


\begin{proof}
Denote $\pi$ the standard collinear ordering of $\mathcal{C}$,
$(a_1, a_2, \dots, a_t)$ its opening sequence,
and $b_i$ the closing point corresponding to $a_i$;
denote $u_i$ the $i$-th special element and $L_i = \ov{u_i a_{i+1}}$
the $i$-th special line.

Suppose that $v\in L$ is outside $\mathcal{C}$ and assume for contradiction that there is a special line $L_i$ such that $v\in L_i$. Either $(v) \circ \pi$ or $\pi \circ (v)$ is a collinear sequence.
This means both $a_i$ and $b_{i+1}$ are collinear with $\{v, a_{i+1}\}$,
and exactly one of them is between $v$ and $a_{i+1}$, the other is outside.

Since $v \in L_i$, the set $\{v, a_{i+1}, u_i\}$ is collinear.
Depending on whether $u_i$ is between $v$ and  $a_{i+1}$ or not,
by Fact \ref{fact.collinear} (c),
there is one point in $x \in \{a_i, b_{i+1}\}$ such that 
$\{x, u_i, v, a_{i+1}\}$ is collinear, so $\{x, u_i, a_{i+1}\}$ is collinear.

If $x = a_i$, $\{u_i, a_i, a_{i+1}\}$ is collinear, which contradicts the definition of $u_i$.
If $x = b_{i+1}$, $u_i \in \ov{a_{i+1} b_{i+1}} = L$;
by Lemma \ref{lem.insertion}, $u_i$ either appears on $\pi$ 
or can be inserted into $\pi$ to form a collinear sequence,
so $\{u_i, a_i, a_{i+1}\}$ is collinear, which gain contradicts the definition of $u_i$.
\end{proof}

\begin{lem}\label{lem.distinct_lines}
For a line $L$ a metric space $(V, \rho)$ without a universal line,
a level index $k$ with $2 \le k \le h(L)-1$, all the special lines in level $k$
are distinct. 
\end{lem}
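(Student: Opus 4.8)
The plan is simply to combine the two preceding lemmas, since they already dispose of both cases that can arise. First I would record the bookkeeping that makes this possible: by definition, every special line in level $k$ arises as the $i$-th special line $L_i = \ov{u_i a_{i+1}}$ with respect to some green component $\mathcal{C}$ in level $k$, for some index $i$ with $1 \le i < t$, where $t$ is the length of the opening sequence of $\mathcal{C}$'s standard collinear ordering. Thus each special line carries an unambiguous label, namely the pair consisting of its parent green component together with its index along that component's opening sequence. This labelling assigns to each special line exactly one parent component, directly from the definition.

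Next I would take any two special lines with distinct labels and split into the two exhaustive cases. If the two labels share the same parent green component $\mathcal{C}$ but differ in their index, then the conclusion is precisely Lemma \ref{lem.distinct_lines_samecpt}, which asserts that all special lines with respect to a single component $\mathcal{C}$ are distinct. If instead the two labels have different parent green components $\mathcal{C} \neq \mathcal{C}'$, then Lemma \ref{lem.distinct_lines_diffcpt} directly gives that the two special lines are distinct. Since every ordered pair of distinct labels falls into exactly one of these two cases, no two special lines in level $k$ can coincide, which is the assertion.

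I do not expect any genuine obstacle here: all of the substantive collinearity and insertion reasoning has already been carried out in Lemmas \ref{lem.distinct_lines_samecpt} and \ref{lem.distinct_lines_diffcpt} (the latter resting on the exclusivity statement of Lemma \ref{lem.distinct_lines_exclusive}). The only point that warrants a sentence of care is the exhaustiveness of the same-component versus different-component dichotomy and the fact that the $(\mathcal{C}, i)$ labelling is well defined, both of which are immediate. Consequently the proof is a short two-case invocation of the earlier results rather than a new argument.
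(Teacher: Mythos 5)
Your proof is correct and is exactly the paper's argument: the paper proves this lemma in one line by citing Lemmas \ref{lem.distinct_lines_samecpt} and \ref{lem.distinct_lines_diffcpt}, precisely the same-component/different-component dichotomy you spell out. Your added bookkeeping about the $(\mathcal{C}, i)$ labelling is harmless elaboration of what the paper leaves implicit.
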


\begin{proof}
This immediately follows from Lemmas \ref{lem.distinct_lines_samecpt} and \ref{lem.distinct_lines_diffcpt}.
\end{proof}

Now we prove the main theorem.

\begin{proof} (of Theorem \ref{thm.main})
Denote $n = |V|$ and $m$ the number of distinct lines.
The generating pairs defined in \eqref{eq.def_generator_pairs} give a partition of $\binom{V}{2}$
into $m$ $K(L)$'s. If none of the parts $K(L)$ has size more than $4n^{4/3}$, we have
the number of parts $m \geq \binom{n}{2} / (4n^{4/3}) \in \Omega(n^{2/3})$.

Otherwise, there is a line $L$ with $|K(L)| \ge 4n^{4/3}$.
Consider the poset $\mathcal{P}_L$.

If there is a $k$ with $1 \le k \le h(L)$ such that the antichain $\mathcal{P}_L^{(k)}$ has more than $n^{4/3}$
elements, by the handshake lemma, we have a point $v \in V$ such that $v$ is incident to more than
$2n^{1/3}$ pairs in $\mathcal{P}_L^{(k)}$, i.e., there are points $u_1, u_2, \dots, u_d$
in $V$, such that $vu_i \in \mathcal{P}_L^{(k)}$ for all $1 \le i \le d$, and $d > 2n^{1/3}$.
For a pair of indices $i$ and $j$ where $1 \le i < j \le d$, $\ov{vu_i} = \ov{vu_j} = L$ implies $\{v, u_i, u_j\}$ is collinear,
now the fact that $\mathcal{P}_L^{(k)}$ is an antichain implies $[u_ivu_j]$.
Fact \ref{fact.star} implies that, for any $k \not\in \{i, j\}$, $\{u_i, u_j, u_k\}$
is not collinear. So $\ov{u_iu_j}$ contains both $u_i$ and $u_j$ but no any other point
in $\{u_1, u_2, \dots, u_d\}$. Therefore we have $\binom{d}{2} \in \Omega(n^{2/3})$ distinct lines.

Now we may assume for each $1 \le k \le h(L)$, we have $|\mathcal{P}_L{(k)}| \le n^{4/3}$,
this is in particular true for $k = 1$ and $k = h(L)$. Since $|K(L)| \ge 4n^{4/3}$,
we have
\begin{equation}\label{eq.normal_level_sum}
\sum_{k=2}^{h(L)-1} \left|\mathcal{P}_L{(k)}\right| = 
\sum_{k=2}^{h(L)-1} \left( \left|Q_L^{(k)} \right| + \sum_{i=1}^{c_L(k)} \left|P_L^{(k, i)}\right| \right)
\ge 2n^{4/3}.
\end{equation}
When $h(L) > n^{2/3}$, Fact \ref{fact.level_k_inner} and Lemma \ref{lem.long_path}
already imply $m \ge n^{2/3}$. Now we may assume $h(L) \le n^{2/3}$.

By Fact \ref{fact.Q_size},
\[
\sum_{k=2}^{h(L)-1} \left|Q_L^{(k)} \right| \le n \left( \frac{1}{1} + \frac{1}{2} + \dots + \frac{1}{\lfloor n^{2/3} \rfloor - 2} \right) \le n\ln n.
\]
Subtracting from \eqref{eq.normal_level_sum}, we have, for large enough $n$,
\[
\sum_{k=2}^{h(L)-1} \left( \sum_{i=1}^{c_L(k)} \left|P_L^{(k, i)}\right|\right)
\ge 2n^{4/3} - n \ln n \ge n^{4/3}.
\]
So there exists a $k$, $2 \le k \le h(L) \le n^{2/3}$, such that
$\sum_{i=1}^{c_L(k)} \left|P_L^{(k, i)}\right| \ge n^{2/3}$.
By definition, each $P_L^{(k, i)}$ has $|P_L^{(k, i)}| \ge 2$
and has exactly $|P_L^{(k, i)}| - 1$ special lines.
By Lemma \ref{lem.distinct_lines}, we have at least
\[
\sum_{i=1}^{c_L(k)} \left( \left|P_L^{(k, i)}\right|-1 \right) 
\ge \sum_{i=1}^{c_L(k)} \left(\left|P_L^{(k, i)}\right| / 2 \right) 
 \ge \frac{n^{2/3}}{2}
\] 
distinct lines in the metric space.
\end{proof}

\section{Discussion -- the structure of two green components}

The following structure of two green components can be proved by an induction.
After simplification we do not need it in the proof of the main theorem, and
we omit the proof.

\begin{fact}\label{fact.P_line_plus}
For every index $k$ such that $2 \le  k \le h(L)-1$, let $\pi$ and $\sigma$
be the standard ordering of two different green components in 
level $k$,
there is a collinear sequence $\pi_1 \circ \sigma_1$,
where $\pi_1$ is either $\pi$ or $\pi^R$,
and $\sigma_1$ is either $\sigma$ or $\sigma^R$.
\end{fact}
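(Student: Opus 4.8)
The plan is to show that the (up to orientation unique) collinear ordering of the combined endpoint set $V(\mathcal{C}) \cup V(\mathcal{C}')$ arranges the two components as two non-interleaving contiguous blocks. Once this is established, restricting such an ordering to $V(\mathcal{C})$ and to $V(\mathcal{C}')$ produces, by Fact \ref{fact.collinear}(e), collinear orderings of the two components, which by Fact \ref{fact.two_ordering} must equal $\pi$ or $\pi^R$ and $\sigma$ or $\sigma^R$ respectively; choosing the global orientation that puts $\mathcal{C}$ first then yields the desired $\pi_1 \circ \sigma_1$. Thus the whole statement reduces to two claims: that $V(\mathcal{C})$ and $V(\mathcal{C}')$ are disjoint with collinear union, and that in the resulting ordering the two components do not interleave.

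Before either claim I would record the structural input. Since $\mathcal{C}$ and $\mathcal{C}'$ are distinct green components of the same level $k$, Fact \ref{fact.complete_on_level} together with Facts \ref{fact.red_element} and \ref{fact.purple_element} (there is neither red nor purple in levels $2 \le k \le h(L)-1$) force every cross pair $e \in \mathcal{C}$, $f \in \mathcal{C}'$ to satisfy $e \sim_b f$. I would then verify disjointness of the endpoint sets: the non-appearance arguments already used in the proof of Lemma \ref{lem.distinct_lines_diffcpt} show that an opening point of one component cannot lie on the ordering of the other, while a short additional argument of Fact \ref{fact.star} type (using that each pair in a component of order at least $2$ has a green neighbour, hence an inner point on a prescribed side) rules out a shared closing point. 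This disjointness is in fact necessary for $\pi_1 \circ \sigma_1$ to be a sequence of distinct points.

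The heart of the proof is a separation statement: every endpoint $p$ of $\mathcal{C}$ is outside $\mathcal{C}'$, and symmetrically. This is exactly the mechanism of Lemma \ref{lem.distinct_lines_diffcpt}, now applied to all endpoints rather than only the opening points. For a fixed pair $\{a'_j, b'_j\}$ of $\mathcal{C}'$, the blue (disjoint) relation b.2 orders the four points with one pair entirely before the other, so $p$ always lies on an extreme side of $\{a'_j, b'_j\}$, i.e. $p$ is outside $\{a'_j, b'_j\}$; inserting $p$ into $\sigma$ via Lemma \ref{lem.insertion} and repeating the end-detecting argument of Lemma \ref{lem.distinct_lines_diffcpt} (were $p$ to land strictly inside $\sigma$ it would fall between some $a'_j$ and $b'_j$) forces $p$ to the first or last position, so $p$ is outside $\mathcal{C}'$. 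Consistency, namely that all endpoints of $\mathcal{C}$ sit beyond the same end of $\sigma$, then follows from combining both directions of separation: if some point of $\mathcal{C}$ were beyond the $a'_1$-end and another beyond the $b'_s$-end, then $a'_1$ would be forced strictly between two points of $V(\mathcal{C})$, hence between its extremes, contradicting that $a'_1$ is outside $\mathcal{C}$.

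Finally I would assemble the ordering. Orient $\sigma$ as $\sigma_1$ so that the common end of $\mathcal{C}$ faces the last point $b_t$ of $\pi$; since $b_t$ is outside $\mathcal{C}'$, the sequence $(b_t) \circ \sigma_1$ is collinear, and gluing it to $\pi$ at the shared extreme point $b_t$ via Fact \ref{fact.collinear}(d) (the neighbours $a_1$ and the first point of $\sigma_1$ lie on opposite sides of $b_t$, as all of $V(\mathcal{C}')$ is beyond $b_t$) shows that $\pi \circ \sigma_1$ is a collinear sequence, so $\pi_1 = \pi$ works. I expect the separation-and-consistency step to be the main obstacle: proving disjointness and, above all, the non-interleaving is where the real geometric work lies, and it is the natural home for the induction the paper alludes to — equivalently one may delete a non-cut vertex of $\mathcal{C}'$ by Lemma \ref{lem.non_cut_point}, apply the inductive hypothesis, and re-insert its two endpoints by Lemma \ref{lem.insertion}, checking that they rejoin the $\mathcal{C}'$-block rather than crossing into $\mathcal{C}$.
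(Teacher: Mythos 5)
The paper itself never proves Fact \ref{fact.P_line_plus} (it appears in the discussion section with the proof explicitly omitted), so your attempt can only be judged on its own merits, and it has a genuine gap exactly at the step you treat as routine: disjointness of $V(\mathcal{C})$ and $V(\mathcal{C}')$, and the related claim that every endpoint of one component is outside every pair of the other. The mechanism you borrow from Lemma \ref{lem.distinct_lines_diffcpt} needs the point in question to lie strictly inside some pair of its own component (the hypothesis $[a_i a_{i+1} b_i]$ is what converts the blue cross-relation into ``$a_{i+1}$ is outside $\{a'_j,b'_j\}$''). Every point of a collinear ordering is interior to some pair of its component \emph{except the two extreme points} $a_1$ and $b_t$ (resp.\ $a'_1$, $b'_s$). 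For those four points your cited argument says nothing, your separation step silently assumes the cross-relation is the disjoint blue relation (b.2) rather than (b.1), and no ``short additional argument of Fact \ref{fact.star} type'' is supplied. In particular, your assertion that an opening point of one component cannot lie on the ordering of the other is false for $a'_1$.

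This gap cannot be repaired, because the statement itself is false: two green components of the same level can share a point (necessarily a common extreme point of both orderings), and then $\pi_1\circ\sigma_1$ cannot be a collinear sequence, since it would list that point twice. Concretely, let $V=\{0,1,\dots,6\}\cup\{w,z\}$ with $\rho(i,j)=|i-j|$ for $0\le i,j\le 6$, $\rho(z,i)=10$ for all $i$, $\rho(w,z)=15$, and
\begin{equation*}
(\rho(w,0),\dots,\rho(w,6))=(11.5,\;11.3,\;11,\;10,\;11,\;11.3,\;11.5).
\end{equation*}
One checks this is a metric space and that the only collinear triples meeting $\{w,z\}$ are $[w\,3\,2]$ and $[w\,3\,4]$. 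Consequently every pair $\{i,j\}\subseteq\{0,\dots,6\}$ except $\{2,3\}$ and $\{3,4\}$ generates the same line $L=\{0,\dots,6\}$, whereas $\ov{23}=\ov{34}=L\cup\{w\}\neq L$, and no line is universal. On $K(L)$ the relation $\preccurlyeq$ is containment of intervals, and since $\{2,3\},\{3,4\}\notin K(L)$, level $1$ is $\{\{0,1\},\{1,2\},\{4,5\},\{5,6\},\{2,4\}\}$, level $2$ is exactly $\{\{0,2\},\{1,3\},\{3,5\},\{4,6\}\}$, and $h(L)=6$, so $k=2\le h(L)-1$. In level $2$ the only green edges are $\{0,2\}\sim_g\{1,3\}$ and $\{3,5\}\sim_g\{4,6\}$, while $\{1,3\}$ and $\{3,5\}$ are blue (they share the point $3$); thus there are two green components, with orderings $(0,1,2,3)$ and $(3,4,5,6)$, whose endpoint sets share the point $3$. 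All hypotheses of Fact \ref{fact.P_line_plus} hold, yet its conclusion fails; the fact would have to be restated (two contiguous blocks glued at a possibly shared extreme point) before any proof could succeed. A secondary flaw, independent of this: your consistency step combines $[x\,a'_1\,b'_s]$ and $[a'_1\,b'_s\,y]$ into $[x\,a'_1\,b'_s\,y]$, but betweenness relations overlapping in only two points cannot be concatenated in a general metric space (in the $5$-cycle, $[0\,1\,2]$ and $[1\,2\,3]$ hold while $[0\,1\,2\,3]$ fails); this is precisely the trap the paper's Lemma \ref{lem.insertion}, Case 3.3, avoids by a quantitative triangle-inequality argument rather than by concatenation.
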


\section*{Acknowledgement}

The author heard this problem from the sessions in the Semicircle math club.
We would like to thank Xiaomin Chen, Qiao Sun, Zhenyuan Sun, Chunji Wang and Qicheng Xu
for the happy discussions on this work in the club.
We thank Xiaomin Chen and Va\v{s}ek Chv\'atal for their
great help in writing this paper.
We also thank two anonymous referees for their nice and 
helpful comments and suggestions.

\end{document}